\numberwithin{equation}{section}
\def\ni {\noindent}
\newcommand{\N}{\mathbb{N}}
\newcommand{\C}{\mathbb{C}}
\newtheorem{thm}{Theorem}[section]
\newtheorem{lem}[thm]{Lemma}
\newtheorem{prop}[thm]{Proposition}
\newtheorem{cor}[thm]{Corollary}
\newtheorem{rem}{Remark}[section]
\newtheorem{ill}{Illustration}[section]
\title{\textbf{Chromatic Zagreb Indices for Graphical Embodiment of Colour Clusters}}
\author{Johan Kok}
\affil{\small Tshwane Metropolitan Police Department, \\ City of Tshwane, South Africa\\ {\tt kokkiek2@tshwane.gov.za}}
\author{Sudev Naduvath}
\affil{\small Centre for Studies in Discrete Mathematics\\ Vidya Academy of Science \& Technology\\ Thrissur, India.\\ {\tt sudevnk@gmail.com}}
\author{Muhammad Kamran Jamil}
\affil{\small Department of Mathematics\\ Riphah Institute of Computing and Applied Sciences\\ Riphah International University\\ Lahore, Pakistan.\\ {\tt m.kamran.sms@gmail.com}}
\date{}
\begin{document}
\maketitle 
	
\begin{abstract}
\ni  For a colour cluster $\C =(\mathcal{C}_1,\mathcal{C}_2, \mathcal{C}_3,\ldots,\mathcal{C}_\ell)$, where $\mathcal{C}_i$ is a colour class such that $|\mathcal{C}_i|=r_i$, a positive integer, we investigate two types of simple connected graph structures $G^{\C}_1$, $G^{\C}_2$ which represent graphical embodiments of the colour cluster such that the chromatic numbers $\chi(G^{\C}_1)=\chi(G^{\C}_2)=\ell$ and $\min\{\varepsilon(G^{\C}_1)\}=\min\{\varepsilon(G^{\C}_2)\} =\sum\limits_{i=1}^{\ell}r_i-1$. Therefore, the problem is the edge-minimality inverse to finding the chromatic number of a given simple connected graph. In this paper, we also discuss the chromatic Zagreb indices corresponding to $G^{\C}_1$, $G^{\C}_2$. 
\end{abstract}

\ni \textbf{Keywords:} Graphical embodiments; colour clusters; colour classes; chromatic Zagreb indices.

\vspace{0.25cm}

\ni \textbf{Mathematics Subject Classification:} 05C15, 05C38, 05C75, 05C85. 

\section{Introduction}
For general notation and concepts in graphs and digraphs see \cite{BM1,CL1,FH,DBW}. Unless mentioned otherwise all graphs we consider in this paper are finite, simple, connected and undirected graphs.
 
The \textit{chromatic number} $\chi(G)\geq 1$ of a graph $G$ is the minimum number of distinct colours that allow a proper colouring of $G$. Such a colouring is called a \textit{chromatic colouring}. of $G$.  The colour weight of a colour in a graph $G$ is the number of times a colour $c_j$ has been allocated to the vertices of $G$, denoted by $\theta(c_j) \geq 1$. In other words, the colour weight of a colour $c_i$ can be considered as the cardinality of the corresponding colour class in the graph concerned.  

Analogous to set theory notation, let $\C = \bigcup\limits_{1\leq i \leq \chi(G)}\mathcal{C}_i$, and be called a colour cluster. It is noted that for $\chi(G)\geq 2$ a graph $G^{\C}$ with $\max\{\varepsilon(G^{\C})\}$ that requires the colour cluster $\C$ to allow exactly a chromatic colouring (minimum proper colouring) is the complete $\chi(G)$-partite graph $K_{\theta(c_1),\theta(c_2),\ldots,\theta(c_{\chi(G)})}$ or put differently, the complete $\ell$-partite graph with vertex partitioning $V_i(G^{\C})$, $1\leq i \leq \ell$ such that $|V_i(G^{\C})| = |\mathcal{C}_i|$, and $\bigcup\limits_{1\leq i \leq \ell} V_i(G^{\C}) = V(G^{\C})$. Therefore, if $|\mathcal{C}_i|=1$, $\forall~ 1\leq i\leq \chi(G)$ it is a complete graph, $K_{\chi(G)}$. For the colour cluster $\C=(\mathcal{C}_1)$ the graph $G^{\C}$ is a null graph (no edges).

Energy can entirely be transformed and transmitted as energy waves. The electromagnetic spectrum of these energy waves ranges between cosmic rays, gamma rays, X-rays, ultra violet rays, optical light with its inherent spectrum of optical colours, infra-red rays, micro waves, short radio waves and long radio waves. These waves or rays transmit at the speed of light. An energy unit per wave (or ray) type is the amount of energy transmitting per second. All such energy units are simply called, colours. If for a given finite cluster of distinct energy units (colours), the colours must be linked to ensure connectivity but same colours must remain apart to prevent merging, then the problem we investigate is that of finding a simple connected graphical embodiment allowing the colours as a proper colouring. More specifically the problem to be considered is to find properties and related results of a connected graph $G^{\C}$ with $\min\{\varepsilon(G^{\C})\}$ that allows the prescribed colour cluster $\C$ as a chromatic colouring. Henceforth the graph $G^{\C}$ will mean a graph with minimum edges allowing $\C$ as a chromatic colouring.

\section{Graphical Embodiment of $\C$}

We note that the cycle $C_5$ has $\chi(C_5)=3$ with colour cluster say, $\C =(c_1,c_1,c_2,c_2,c_3)$. However, path $P_5$ is a graph with minimum number of edges that allows $\C$ as a proper colouring (not chromatic colouring). This observation leads us to our first useful result given below.

\begin{lem}\label{Lem-2.1}
For any colour cluster $\C = (\mathcal{C}_1,\mathcal{C}_2,\mathcal{C}_3,\ldots,\mathcal{C}_\ell)$ with $|\mathcal{C}_i| = r_i >0,\ \ell\geq 2,\ 1\leq i \leq \ell$, the connected graphical embodiment with minimum edges that allows $\C$ a proper colouring has $\sum\limits_{i=1}^{\ell}r_i -1$ edges. 
\end{lem}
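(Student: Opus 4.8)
\ni The plan is to sandwich the quantity $\min\{\varepsilon(G^{\C})\}$ between a trivial lower bound and the edge count of an explicitly built tree. Put $N=\sum_{i=1}^{\ell}r_i$, the number of vertices forced on any graph that admits a colouring using each $\mathcal{C}_i$ with its prescribed multiplicity. For the lower bound I would simply invoke connectivity: a connected graph on $N$ vertices has at least $N-1$ edges, with equality precisely for trees, so $\min\{\varepsilon(G^{\C})\}\geq N-1$.

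For the matching upper bound the idea is to produce one tree on $N$ vertices that admits $\C$ as a proper colouring; having exactly $N-1$ edges, it realises the lower bound. The point to keep in mind is that neither a path (which fails as soon as some $r_i$ exceeds $\lceil N/2\rceil$) nor a star (which fails unless some $r_i=1$) works for every cluster, whereas a \emph{double star} does, because in a tree an entire colour class may be used as an independent set at no cost. Concretely, I would colour vertices $x_1,\dots,x_{r_1}$ with $c_1$ and vertices $y_1,\dots,y_{N-r_1}$ with $c_2,\dots,c_\ell$, each $c_i$ appearing $r_i$ times; this is possible since $N-r_1=\sum_{i=2}^{\ell}r_i\geq \ell-1\geq 1$ as $\ell\geq 2$. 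Then join $y_1$ to every $x_i$ and join $x_1$ to every $y_k$ with $k\geq 2$.

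The verification then splits into three routine checks. First, the graph is connected: each $x_i$ is adjacent to $y_1$, each $y_k$ with $k\geq 2$ is adjacent to $x_1$, and $x_1y_1$ is an edge. Second, its size is $r_1+(N-r_1-1)=N-1$, so being connected it is a tree. Third, every edge joins a $c_1$-coloured vertex to a $c_j$-coloured vertex with $j\geq 2$, so the colouring is proper, and by construction $c_i$ is used exactly $r_i$ times; hence this tree is a legitimate graphical embodiment of $\C$ allowing it as a proper colouring. The degenerate situations $r_1=1$ and $N-r_1=1$ (the latter meaning $\ell=2$, $r_2=1$) simply collapse to a star and cost one extra line. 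Combining the two bounds yields $\min\{\varepsilon(G^{\C})\}=N-1=\sum_{i=1}^{\ell}r_i-1$.

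\ni I do not anticipate a real obstacle; the only genuinely non-mechanical decision is choosing a tree shape that works uniformly across all clusters, which is exactly why the double star (rather than a path or a star) is the right object, and one should be careful to state the fact that $N$ vertices force at least $N-1$ edges, since that carries the entire lower bound.
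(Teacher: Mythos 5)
Your proposal is correct and matches the paper's argument: the tree you build (one centre adjacent to all non-$c_1$ vertices, a second centre adjacent to the remaining $c_1$ vertices) is exactly the paper's Type-I construction, and the lower bound is the same appeal to connectivity forcing at least $\sum_{i=1}^{\ell}r_i-1$ edges. Your version merely makes the $N-1$ lower bound explicit, which the paper leaves implicit in the phrase ``connected acyclic graph has minimum number of edges.''
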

\begin{proof}
Label the $r_i$ vertices corresponding to $\mathcal{C}_i$ to be $v_{i,1},v_{i,2},v_{i,3},\ldots,v_{i,r_i}$, $1\leq i \leq \ell$. Add the edges $v_{1,1}v_{j,k}$, for all $2\leq j \leq \ell,\ 1\leq k \leq r_j$. Also, add the edges $v_{1,i}v_{2,1}$, $2\leq i\leq r_1$. Clearly, the graph $G_1$ obtained is a tree for which $\C$ is a proper colouring. Also, since $G_1$ is a connected acyclic graph, it has minimum number of edges, $\sum\limits_{i=1}^{\ell}r_i -1$.
\end{proof}

The construction methodology mentioned in the proof of Lemma \ref{Lem-2.1} may be called the \textit{Type-I graphical embodiment}. In order to obtain a $G^{\C}$, we consider the induced star subgraph $\langle v_{1,1},v_{2,1},v_{3,1},\ldots,v_{\ell,1}\rangle$ of $G_1$ and add the edges to obtain a complete subgraph, $K_\ell$. Note that $\frac{1}{2}(\ell -1)(\ell -2)$ edges were added to the graph $G_1$.

An alternative construction of such a graph $G^{\C}$ can be done by utilising the initial edges $v_{i,1}v_{i+1,j}$, $1\leq i \leq \ell -1$, $1\leq j\leq r_{i+1}$ as well as the edges $v_{2,1}v_{1,j}$, $2\leq j \leq r_1$ to obtain graph $G_2$. The aforesaid construction is called the \textit{Type-II graphical embodiment}. Thereafter, consider the induced path subgraph $\langle v_{1,1},v_{2,1},v_{3,1},\ldots,v_{\ell,1}\rangle$ and add the required $\frac{1}{2}(\ell -1)(\ell -2)$ edges to obtain the complete subgraph, $K_{\ell}$. This brings us to the existence of the following theorem.

\begin{thm}
For any colour cluster $\C$, at least one graphical embodiment exist with minimum number of edges for which $\C$ is a chromatic colouring and $\chi(G^{\C})=\ell$.
\end{thm}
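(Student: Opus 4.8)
The plan is to exhibit the required graph by means of one of the two constructions already introduced, and then to verify the three things that have to hold: connectedness with the claimed (minimum) number of edges, that $\C$ is a proper colouring, and that the chromatic number is exactly $\ell$. We may assume $\ell\ge 2$, since for $\ell=1$ the cluster is a single class and the situation is degenerate. Throughout I keep the labelling of the proof of Lemma~\ref{Lem-2.1}, so the vertex set is $\{v_{i,j}:1\le i\le\ell,\ 1\le j\le r_i\}$, it has $n=\sum_{i=1}^{\ell}r_i$ vertices, and $v_{i,1}$ is a representative of the colour class $\mathcal{C}_i$.

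First I would take the Type-I graphical embodiment $G^{\C}_1$: start from the tree $G_1$ of Lemma~\ref{Lem-2.1}, colour every vertex of $\mathcal{C}_i$ with $c_i$ (a proper colouring of $G_1$ by that lemma), and add the $\frac{1}{2}(\ell-1)(\ell-2)$ edges that turn the induced star $\langle v_{1,1},v_{2,1},\dots,v_{\ell,1}\rangle$ into a copy of $K_\ell$. The spanning tree $G_1$ survives inside $G^{\C}_1$, so $G^{\C}_1$ is connected; since $K_\ell\subseteq G^{\C}_1$ we get $\chi(G^{\C}_1)\ge\ell$. For the reverse inequality observe that the colouring $c_i$ on $\mathcal{C}_i$ is still proper on $G^{\C}_1$: the only edges of $G^{\C}_1$ not already in $G_1$ join two clique vertices $v_{i,1},v_{j,1}$ with $i\ne j$, and these carry the distinct colours $c_i,c_j$. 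Hence $\chi(G^{\C}_1)\le\ell$, so $\chi(G^{\C}_1)=\ell$ and $\C$ is a chromatic — not merely a proper — colouring of $G^{\C}_1$. Carrying out exactly the same argument with the Type-II embodiment $G^{\C}_2$, where instead the induced path $\langle v_{1,1},\dots,v_{\ell,1}\rangle$ of the tree $G_2$ is completed to $K_\ell$, yields a second graph with the same properties, which establishes the ``at least one'' clause (and in general gives two non-isomorphic embodiments).

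It remains to see that $G^{\C}_1$ (equivalently $G^{\C}_2$) realises the minimum possible number of edges. Counting gives $|E(G^{\C}_1)|=(n-1)+\frac{1}{2}(\ell-1)(\ell-2)=\binom{\ell}{2}+(n-\ell)$. Conversely, any connected graph $G$ on $n$ vertices for which $\C$ is a chromatic colouring has $\chi(G)=\ell$, hence contains an $\ell$-critical subgraph $H$ on some $m\ge\ell$ vertices, with $\delta(H)\ge\ell-1$ and therefore $|E(H)|\ge\frac{1}{2}m(\ell-1)$; contracting $H$ to a point and using connectedness of $G$ produces at least $n-m$ further edges, none of them inside $H$, so $|E(G)|\ge\frac{1}{2}m(\ell-1)+(n-m)$. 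For $\ell\ge 3$ the right-hand side is non-decreasing in $m$, hence minimal at $m=\ell$, which is exactly $\binom{\ell}{2}+(n-\ell)$; for $\ell=2$ connectedness alone gives $|E(G)|\ge n-1$, again matching. Since the family of connected graphs on $n$ labelled vertices admitting $\C$ as a chromatic colouring is finite and, by the previous paragraph, non-empty, a minimum-edge member exists and is a $G^{\C}$, completing the proof.

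I expect the only step needing genuine care is the properness verification when the clique is glued in: it hinges on the labelling guaranteeing that $v_{1,1},\dots,v_{\ell,1}$ lie in pairwise different colour classes, so that completing them to $K_\ell$ creates no monochromatic edge — this single observation both forces $\chi$ up to $\ell$ and keeps it from exceeding $\ell$. The critical-subgraph lower bound is the other point with real content; if one reads the theorem purely as an existence statement it may be replaced by the bare finiteness remark, since the two constructions already show the relevant family is non-empty.
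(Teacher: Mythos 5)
Your proof is correct and shares the paper's constructive core (the Type-I/Type-II completions), but it goes genuinely further than the paper's own argument. The paper settles the theorem in two lines: the all-singleton cluster gives $K_\ell$, and every other cluster is declared a direct consequence of Lemma~\ref{Lem-2.1} and the alternative construction; it never verifies that the completed graphs have $\chi$ exactly $\ell$, nor that they are edge-minimal among graphs admitting $\C$ as a \emph{chromatic} colouring (Lemma~\ref{Lem-2.1} only addresses proper colourings and tree-minimality). You supply both missing pieces: the properness check across the glued clique (the representatives $v_{1,1},\dots,v_{\ell,1}$ lie in pairwise distinct classes, forcing $\chi=\ell$ from both sides), and a sharp lower bound $|E(G)|\ge\binom{\ell}{2}+(n-\ell)$ for any connected $G$ with $\chi(G)=\ell$, via an $\ell$-critical subgraph with minimum degree at least $\ell-1$ together with the contraction/connectedness count, which matches the $(n-1)+\frac{1}{2}(\ell-1)(\ell-2)$ edges of the construction. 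Your unified argument also absorbs the paper's special case, since when all $r_i=1$ the Type-I completion is exactly $K_\ell$. What the paper's route buys is brevity, and it suffices if the theorem is read purely as existence of a minimizer (non-emptiness plus finiteness, which you also note); what yours buys is an explicit value of the minimum and a rigorous proof that the Type-I/II embodiments attain it, a fact the paper relies on implicitly in later sections but never establishes. Two cosmetic caveats only: the parenthetical claim that the two embodiments are ``in general non-isomorphic'' is unnecessary and fails in small cases (for $\ell=2$ they coincide), and your exclusion of $\ell=1$ is consistent with the paper's convention that $\C=(\mathcal{C}_1)$ yields the edgeless graph.
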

\begin{proof}
As stated above, for the colour cluster $\C=(\mathcal{C}_i)\ 1\leq i \leq \ell$ and $|\mathcal{C}_i|=|\mathcal{C}_j|=1$, for all values of $i,j$, the graph $G^{\C}$ is complete. Hence, at least one graph with required properties exists.

For all other colour clusters, the result is a direct consequence of Lemma \ref{Lem-2.1} and the alternative construction of $G^{\C}$.
\end{proof}

We note that for a colour cluster $\C =(\mathcal{C}_1, \mathcal{C}_2)$ the graph $G^{\C}$ is acyclic. For colour clusters $\C = (\mathcal{C}_i)$, $i \geq 3$ the graphical embodiment $G^{\C}$ has a unique maximum clique. Figure 1(a) and Figure 1(b) depict $G^{\C}_1$, $G^{\C}_2$ for $\C = (c_1,c_1,c_1,c_1,c_1,c_2,c_2,c_2,c_2,c_3,c_3,c_3,\\c_4,c_4,c_4)= ((c_1,c_1,c_1,c_1,c_1),(c_2,c_2,c_2,c_2),(c_3,c_3,c_3),(c_4,c_4,c_4))$.

\begin{figure*}[h!]
	\centering
	\begin{subfigure}[b]{0.45\textwidth}
		\centering
		\includegraphics[height=2in]{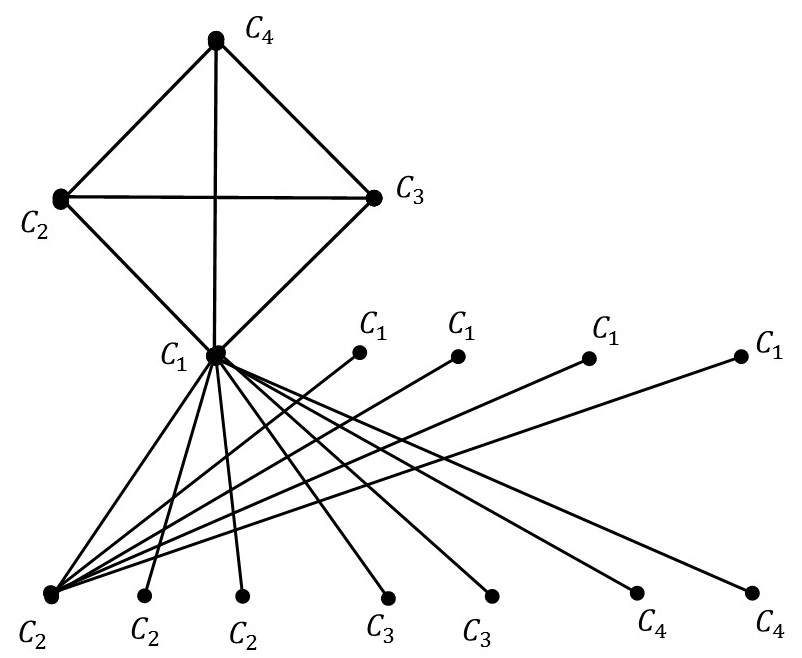}
		\caption{}\label{fig:fig-1a}
	\end{subfigure}%
	\quad 
	\begin{subfigure}[b]{0.45\textwidth}
		\centering
		\includegraphics[height=2in]{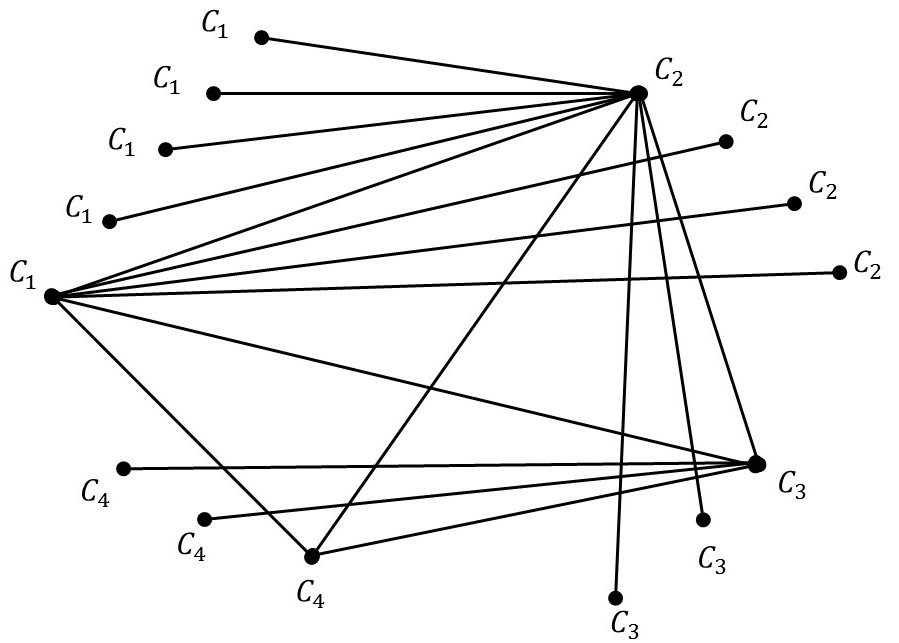}
		\caption{}\label{fig:fig-1b}
	\end{subfigure}
	\caption{}\label{fig:fig-1}
\end{figure*}

\section{Chromatic Zagreb Indices of $G^{\C}$}

The topological graph indices related to irregularity of a graph namely the \textit{first Zagreb index}, $M_1(G)$ and the \textit{second Zagreb index}, $M_2(G)$ are of the oldest irregularity measures researched. A new \textit{irregularity} of $G$ has been introduced in \cite{MOA} as $irr(G)=\sum \limits _{e\in E(G)} imb(e)$, where $imb(e)=|d(v)-d(u)|$. In \cite{GHF}, this irregularity index was named the \textit{third Zagreb index} to conform with the terminology of chemical graph theory. Recently, the topological index called \textit{total irregularity} has been introduced in \cite{ABD1} as $irr_t(G) =\frac{1}{2}\sum \limits_{u,v \in V(G)}|d(u) - d(v)|$. This irregularity index may be called the \textit{fourth Zagreb index}. The study of Zagreb indices is strongly dependent on the structural property of the edges and subsequently the degree of the vertices of the graph under study.

A vertex colouring with colours of minimum subscripts is called a \textit{minimum parameter} colouring. Unless stated otherwise, the colour sets we consider in this paper are minimum parameter colour sets. The weight of a colour $c_i$ is defined to be the number of times the colour $c_i$ is allocated to vertices of a graph $G$ and is denoted by $\theta(c_i)$.   If $\varphi:v_i \mapsto c_j$, then we write $c(v_i) = c_j$.

The notion of chromatic Zagreb indices was introduced in \cite{KSM}. For ease of reference we also recall that the first three Zagreb indices are defined as:

\begin{eqnarray}
M_1(G) & = & \sum\limits_{i=1}^{n}d^2(v_i) = \sum \limits_{i=1}^{n-1}\sum\limits_{j=2}^{n} (d(v_i) + d(v_j))_{v_iv_j \in E(G)}\\
M_2(G) & = & \sum \limits_{i=1}^{n-1}\sum\limits_{j=2}^{n} d(v_i)d(v_j)_{v_iv_j \in E(G)}\\
M_3(G) & = & \sum \limits_{i=1}^{n-1}\sum\limits_{j=2}^{n} |d(v_i) - d(v_j)|_{v_iv_j \in E(G)}
\end{eqnarray}

We define the default values, $M_1(K_1)= M_2(K_1)=M_3(K_1)=0$. Note that the Zagreb indices are all functions of the degree of vertices of graph $G$. For a given graph, the vertex degrees are invariants. If the  invariants $d(v_i),\ \forall\, v_i \in V(G)$ are replaced by the parameters $s$, $c(v_i) = c_s$, $\forall v_i \in V(G)$ the \textit{chromatic Zagreb indices} are defined. Note that for any minimum parameter set of colours $\mathcal{C}$, $|\mathcal{C}| = \ell$, a graph $G$ has $\ell!$ minimum parameter colourings. Denote these colourings $\varphi_t(G)$, $1\leq t \leq \ell!$. Define the variable chromatic Zagreb indices as:
\begin{eqnarray}
M^{\varphi_t}_1(G) & = & \sum\limits_{i=1}^{n}s^2, c(v_i)=c_s,1\leq t \leq \ell!,\\ 
 & = & \sum\limits_{j=1}^{\ell}\theta(c_j)\cdot j^2,\ c_j \in \mathcal{C},\\
M^{\varphi_t}_2(G) & = & \sum \limits_{i=1}^{n-1}\sum\limits_{j=2}^{n} (s\cdot k)_{v_iv_j \in E(G)},\ c(v_i)= s,\ c(v_j)=k,\ 1\leq t \leq \ell!,\\
M^{\varphi_t}_3(G) & = & \sum \limits_{i=1}^{n-1}\sum\limits_{j=2}^{n} |s-k|_{v_iv_j \in E(G)},\  c(v_i)= s,\ c(v_j)=k,\ 1\leq t \leq \ell!.
\end{eqnarray}
 
From the above we define the minimum and maximum chromatic Zagreb indices as follows:

\begin{eqnarray*}
M^{\varphi^-}_1(G) & = & \min\{M^{\varphi_t}_1(G): 1\leq t \leq \ell!\}, \\
M^{\varphi^+}_1(G) & = & \max\{M^{\varphi_t}_1(G): 1\leq t \leq \ell!\}, \\
M^{\varphi^-}_2(G) & = & \min\{M^{\varphi_t}_2(G): 1\leq t \leq \ell!\},\\
M^{\varphi^+}_2(G) & = & \max\{M^{\varphi_t}_2(G): 1\leq t \leq \ell!\},\\
M^{\varphi^-}_3(G) & = & \min\{M^{\varphi_t}_3(G): 1\leq t \leq \ell!\},\\
M^{\varphi^+}_3(G) & = & \max\{M^{\varphi_t}_3(G):1\leq t \leq \ell!\}.
\end{eqnarray*}

By convention we define the default values $M^{\varphi^-}_2(K_1)=M^{\varphi^+}_2(K_1)=0$, and $M^{\varphi^-}_3(K_1) = M^{\varphi^+}_3(K_1)= 1$.

\ni In view of the above mentioned terminology, we also observe the following useful lemma.

\begin{lem}\label{Lem-3.1}
For a graph $G$ with chromatic colouring $\mathcal{C} = (c_1,c_2,c_3,\ldots,c_k)$, the graph $G+e$, $e =c_ic_j,\ i\neq j$ has $M^{\varphi_t}_1(G)=M^{\varphi_t}_1(G+e)$ and therefore, $M^{\varphi^-}_1(G)=M^{\varphi^-}_1(G+e)$ and $M^{\varphi^+}_1(G)=M^{\varphi^+}_1(G+e)$.
\end{lem}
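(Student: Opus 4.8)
The plan is to observe that the first chromatic Zagreb index $M^{\varphi_t}_1$ depends only on the colour-class sizes and not on the edge set of the graph. Concretely, fix a minimum parameter colouring $\varphi_t$ of $G$. The formula $M^{\varphi_t}_1(G) = \sum_{i=1}^{n} s^2$ with $c(v_i) = c_s$ can be regrouped by colour class as $M^{\varphi_t}_1(G) = \sum_{j=1}^{k} \theta(c_j)\cdot j^2$, exactly as recorded in the definition. The key point is that adding the edge $e = c_i c_j$ with $i \neq j$ joins two vertices that already received distinct colours, so $\varphi_t$ remains a proper colouring of $G+e$; moreover it is still a minimum parameter colouring, since $\chi(G+e) = \chi(G) = k$ because no new colour is forced. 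Hence $\varphi_t$ is simultaneously a minimum parameter colouring of $G$ and of $G+e$, and the vertex set together with the colour assignment is unchanged, so each weight $\theta(c_j)$ is the same whether computed in $G$ or in $G+e$.

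First I would make precise that the $\ell!$ (here $k!$) minimum parameter colourings of $G$ are in bijection with those of $G+e$: both are obtained by permuting the $k$ colour labels on the fixed colour classes, and this is legitimate in $G+e$ precisely because $G+e$ still needs exactly $k$ colours and its colour classes coincide with those of $G$ (adding an edge between two different colour classes does not merge or split any class). Under this bijection the value $M^{\varphi_t}_1 = \sum_{j=1}^{k}\theta(c_j)\cdot j^2$ is literally the same expression on both sides, since the $\theta(c_j)$ are determined by the class sizes alone. Therefore $M^{\varphi_t}_1(G) = M^{\varphi_t}_1(G+e)$ for every $t$.

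Taking minimum and maximum over $t \in \{1,\dots,k!\}$ on both sides then gives $M^{\varphi^-}_1(G) = M^{\varphi^-}_1(G+e)$ and $M^{\varphi^+}_1(G) = M^{\varphi^+}_1(G+e)$ immediately, which completes the argument.

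I do not anticipate a genuine obstacle here; the only subtlety worth spelling out is the claim that $\chi(G+e) = \chi(G)$ and that the colour classes are preserved, i.e.\ that adding an edge \emph{between two colour classes} of an existing chromatic colouring does not destroy that colouring's validity or minimality. Once that is stated, the identity $M^{\varphi_t}_1(G) = \sum_{j=1}^{k}\theta(c_j)\cdot j^2$ does all the work, since it exhibits $M^{\varphi_t}_1$ as a function of the colour cluster only, totally blind to adjacency.
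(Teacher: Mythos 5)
Your proposal is correct and follows essentially the same route as the paper's proof, which simply notes that $M^{\varphi_t}_1$ is defined in terms of colour subscripts alone and that $\chi(G)=\chi(G+e)$; you merely spell out in more detail the points the paper leaves implicit (that the colouring stays proper and minimum, that the colour classes and hence the weights $\theta(c_j)$ are preserved, and that the min/max over the $k!$ colourings then coincide).
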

\begin{proof}
Because the $M^{\varphi_t}_1(G)$ is defined in terms of the colour subscript only and $\chi(G) = \chi(G+e)$ the result follows immediately.
\end{proof}

\subsection{Application to $G^{\C}$}

In this discussion, we will consider a specific initiation colour cluster from amongst the $\ell!$ orderings of the colour classes. That is, we consider a colour cluster $\C = (\mathcal{C}_1,\mathcal{C}_2,\mathcal{C}_3,\ldots,\mathcal{C}_\ell)$ with $|\mathcal{C}_i|\geq |\mathcal{C}_{i+1}|$, $1 \leq i \leq \ell -1$. Note that $\theta(c_i)=|\mathcal{C}_i|=r_i$. It implies that if after a mapping, say $c_j\mapsto c_{f(i)}$, then $\theta(c_j) \mapsto \theta(c_{f(i)})$ and $r_j \mapsto r_{f(i)}$. Hence, the notation styles may be used interchangeably depending on which notation proffers the idea best. The convention is to determine the minimum chromatic Zagreb index directly in terms of the initiation colour cluster and the definition of construction. Determining the maximum chromatic Zagreb index will follow from an appropriate colour mapping.

\ni First, we recall important results that have been proved in \cite{KSM}.

\begin{thm}
{\rm \cite{KSM}} For a finite tree of order $n\geq 4$, we have
\begin{enumerate}\itemsep0mm
\item[(i)] $n+ 3 \leq M^{\varphi^-}_1(T) \leq M^{\varphi^+}_1(T)\leq 4n- 3$;
\item[(ii)] $M^{\varphi^-}_2(T) = M^{\varphi^+}_2(T) =2(n-1)$;
\item[(iii)] $M^{\varphi^-}_3(T) = M^{\varphi^+}_3(T) = n-1$.
\end{enumerate}
\end{thm}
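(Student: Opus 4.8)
The plan is to interpret each chromatic Zagreb index combinatorially and then use the very rigid structure of a tree — namely that it has exactly $n-1$ edges, is connected, and is bipartite — to pin down each quantity.

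For part (ii), the key observation is that $M^{\varphi_t}_2(T)=\sum_{uv\in E(T)} c(u)c(v)$, and since every tree is bipartite, $\chi(T)=2$, so in any minimum-parameter colouring only the colours $c_1$ and $c_2$ are used. Hence every edge $uv$ of $T$ joins a vertex coloured $1$ to one coloured $2$, contributing exactly $1\cdot 2 = 2$. Summing over the $n-1$ edges gives $M^{\varphi_t}_2(T)=2(n-1)$ for \emph{every} colouring $\varphi_t$, so the minimum and maximum coincide and equal $2(n-1)$. The same reasoning handles (iii): each edge contributes $|1-2|=1$, so $M^{\varphi_t}_3(T)=n-1$ identically, and again the min and max agree.

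For part (i), I would use the reformulation $M^{\varphi_t}_1(T)=\sum_{j=1}^{2}\theta(c_j)\cdot j^2 = \theta(c_1)\cdot 1 + \theta(c_2)\cdot 4 = \theta(c_1)+4\theta(c_2)$, where $\theta(c_1)+\theta(c_2)=n$. Writing $\theta(c_2)=n-\theta(c_1)$ gives $M^{\varphi_t}_1(T)=4n-3\theta(c_1)$, which is decreasing in $\theta(c_1)$; thus the extremal values are governed by how balanced or unbalanced the bipartition of $T$ can be made. The maximum of $M_1^{\varphi_t}$ is attained when $\theta(c_1)$ is as small as possible, i.e. when one colour class is a single vertex; this is achievable precisely for the star, where $\theta(c_1)=1$ gives $4n-3$, and in general $\theta(c_1)\ge 1$ forces $M^{\varphi^+}_1(T)\le 4n-3$. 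The minimum is attained when $\theta(c_1)$ is as large as possible; for a tree on $n\ge 4$ vertices the larger part of the bipartition has size at most $n-1$ (since the smaller part is nonempty) but in fact one must argue the larger side cannot exceed $n-1$ and, to get the clean bound $n+3$, that it is at most... here one actually wants $\theta(c_2)\ge 1$ only, which gives $M_1\le 4n-3$ on the high side and $M_1\ge 1\cdot(n-1)+4\cdot 1 = n+3$ on the low side when $\theta(c_1)=n-1,\theta(c_2)=1$ — but that is the star again, so the correct reading is that swapping the roles of which class is ``small'' is what interpolates, and the bound $M^{\varphi^-}_1(T)\ge n+3$ follows because the smaller colour class always has size $\ge 1$, forcing $\theta(c_2)\ge 1$ in the colouring minimising $M_1$, i.e. $\theta(c_1)\le n-1$.

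The main obstacle is part (i): one must verify that for every tree on $n\ge 4$ vertices both bipartition classes are nonempty (immediate, as $n\ge 2$ and $T$ is connected with an edge) and that this is exactly the constraint yielding the stated bounds — no finer structural information about $T$ is needed, since $\theta(c_1)$ ranges over $\{1,\dots,n-1\}$ depending only on the bipartition, and the two endpoints $\theta(c_1)=1$ and $\theta(c_1)=n-1$ give $4n-3$ and $n+3$ respectively. The hypothesis $n\ge 4$ is what prevents degenerate small cases (e.g. $n=1,2$) from interfering with these inequalities. I would close by remarking that both extremes are sharp, witnessed by the star $K_{1,n-1}$ (for the value $4n-3$, using the colouring that puts colour $c_1$ on the centre) and by the path or a balanced double-star (for values near $n+3$), though sharpness is presumably recorded in \cite{KSM}.
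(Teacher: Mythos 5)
Your argument is essentially correct, and it is worth noting that the paper itself offers no proof of this statement: it is quoted from \cite{KSM} as a known result, so there is nothing internal to compare against. Your route is the natural one. For (ii) and (iii) the observation that a tree with $n\geq 2$ vertices has $\chi(T)=2$, so every minimum parameter colouring uses only $c_1,c_2$ and every edge contributes $1\cdot 2=2$ (resp.\ $|1-2|=1$), immediately gives $M^{\varphi_t}_2(T)=2(n-1)$ and $M^{\varphi_t}_3(T)=n-1$ for \emph{every} $\varphi_t$, hence equality of min and max; this is clean and complete. For (i) your core identity $M^{\varphi_t}_1(T)=\theta(c_1)+4\theta(c_2)=4n-3\theta(c_1)$ together with $1\leq\theta(c_1)\leq n-1$ (both classes of the unique bipartition of a connected bipartite graph are nonempty) yields $n+3\leq M^{\varphi_t}_1(T)\leq 4n-3$ for every colouring, which is all the theorem asserts; the surrounding prose wanders, but the logic closes. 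Two small corrections: you could state explicitly that the bipartition of a tree is unique, so the $2!=2$ minimum parameter colourings are exactly the two assignments of $c_1,c_2$ to the two classes; and your sharpness aside is off --- the star $K_{1,n-1}$ witnesses \emph{both} extremes ($M^{\varphi^-}_1=n+3$ with the centre coloured $c_2$, $M^{\varphi^+}_1=4n-3$ with the centre coloured $c_1$), whereas a path or balanced double-star has nearly equal classes and gives $M^{\varphi^-}_1\approx\frac{5n}{2}$, nowhere near $n+3$. Since the theorem only claims the inequalities, this slip does not affect the proof.
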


Applying Lemma \ref{Lem-3.1} to the $G^{\C}$, where $\C=(\mathcal{C}_1,\mathcal{C}_2)$, we have the next result as follows.

\begin{cor}
\begin{enumerate}\itemsep0mm
\item[(i)] $\theta(c_1)+\theta(c_2)+3 \leq M^{\varphi^-}_1(G^{\C}) \leq M^{\varphi^+}_1(G^{\C})\leq 4(\theta(c_1) + \theta(c_2))- 3$.
\item[(ii)] $M^{\varphi^-}_2(G^{\C}) = M^{\varphi^+}_2(G^{\C}) =2(\theta(c_1) + \theta(c_2)-1)$.
\item[(iii)] $M^{\varphi^-}_3(G^{\C}) = M^{\varphi^+}_3(G^{\C}) = \theta(c_1) + \theta(c_2)-1$.
\end{enumerate}
\end{cor}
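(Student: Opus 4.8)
The plan is to recognize that when $\C=(\mathcal{C}_1,\mathcal{C}_2)$, the graphical embodiment $G^{\C}$ is precisely a tree. Indeed, the earlier remark in the excerpt states that for a two-class colour cluster the graph $G^{\C}$ is acyclic, and by Lemma \ref{Lem-2.1} it has exactly $\theta(c_1)+\theta(c_2)-1$ edges on $\theta(c_1)+\theta(c_2)$ vertices, hence it is a tree of order $n=\theta(c_1)+\theta(c_2)$. So the strategy is simply to invoke the quoted theorem from \cite{KSM} on chromatic Zagreb indices of trees, with $n=\theta(c_1)+\theta(c_2)$, and substitute.

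The steps in order: First, observe $n:=|V(G^{\C})|=\sum_{i=1}^{2}r_i=\theta(c_1)+\theta(c_2)$, and that $G^{\C}$ is a tree (acyclic by the remark, connected with $n-1$ edges by Lemma \ref{Lem-2.1}). Second, note the chromatic number is $\ell=2$, so there are $2!=2$ minimum-parameter colourings: the one assigning subscript $1$ to $\mathcal{C}_1$ and $2$ to $\mathcal{C}_2$, and its swap. For part (i), apply item (i) of the quoted tree theorem directly (valid once $n\geq 4$), giving $n+3\leq M^{\varphi^-}_1(G^{\C})\leq M^{\varphi^+}_1(G^{\C})\leq 4n-3$; substituting $n=\theta(c_1)+\theta(c_2)$ yields the claimed double inequality. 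Alternatively, one can compute $M^{\varphi_t}_1$ from equation (3.5) as $\theta(c_1)\cdot1^2+\theta(c_2)\cdot2^2$ for one ordering and $\theta(c_1)\cdot4+\theta(c_2)\cdot1$ for the other, and check both lie in the stated range. For parts (ii) and (iii), apply items (ii) and (iii) of the tree theorem: $M^{\varphi^-}_2=M^{\varphi^+}_2=2(n-1)=2(\theta(c_1)+\theta(c_2)-1)$ and $M^{\varphi^-}_3=M^{\varphi^+}_3=n-1=\theta(c_1)+\theta(c_2)-1$. Here the independence of the ordering is automatic: every edge of the tree joins a vertex of $\mathcal{C}_1$ to a vertex of $\mathcal{C}_2$ (it is bipartite with these as the parts), so each edge contributes $1\cdot2=2$ to $M^{\varphi_t}_2$ and $|1-2|=1$ to $M^{\varphi_t}_3$ regardless of which class receives subscript $1$, and there are $n-1$ edges.

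One caveat worth flagging: the quoted tree theorem requires $n\geq 4$, so strictly the corollary as stated needs $\theta(c_1)+\theta(c_2)\geq 4$; the small cases ($n=2,3$) would need the conventions $M^{\varphi^\pm}_2(K_1)=0$, $M^{\varphi^\pm}_3(K_1)=1$ and direct inspection of $P_2,P_3$, and indeed for $P_2$ one gets $M^{\varphi_t}_2=2$, $M^{\varphi_t}_3=1$, consistent with $2(n-1)$ and $n-1$. The main obstacle is essentially bookkeeping rather than mathematics: making sure the identification $G^{\C}\cong$ tree is justified (which the preceding remark and Lemma \ref{Lem-2.1} supply) and that Lemma \ref{Lem-3.1} — which guarantees $M^{\varphi_t}_1$ is unchanged by adding edges within the colour cluster — is not actually needed here since no edges are added in the $\ell=2$ case; its mention in the lead-in is a red herring for parts (ii) and (iii). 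Thus the proof is a short substitution into the cited result.

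\begin{proof}
Let $\C=(\mathcal{C}_1,\mathcal{C}_2)$ and write $n=\theta(c_1)+\theta(c_2)=r_1+r_2$. By the remark preceding this corollary, $G^{\C}$ is acyclic, and by Lemma \ref{Lem-2.1} it is connected with $n-1$ edges on $n$ vertices; hence $G^{\C}$ is a tree of order $n$. Moreover $\chi(G^{\C})=\ell=2$, so $G^{\C}$ is bipartite with parts $V_1,V_2$ where $|V_i|=r_i$, and there are exactly $2!=2$ minimum parameter colourings, obtained from each other by interchanging the subscripts $1$ and $2$.

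For (i), apply the cited theorem from \cite{KSM}: for a tree $T$ of order $n$ we have $n+3\le M^{\varphi^-}_1(T)\le M^{\varphi^+}_1(T)\le 4n-3$. Substituting $n=\theta(c_1)+\theta(c_2)$ gives
\[
\theta(c_1)+\theta(c_2)+3 \le M^{\varphi^-}_1(G^{\C}) \le M^{\varphi^+}_1(G^{\C}) \le 4(\theta(c_1)+\theta(c_2))-3.
\]

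For (ii) and (iii), every edge of $G^{\C}$ joins a vertex of $V_1$ to a vertex of $V_2$, so for either of the two colourings each edge $v_iv_j$ has $\{c(v_i),c(v_j)\}=\{1,2\}$, contributing $1\cdot 2=2$ to $M^{\varphi_t}_2(G^{\C})$ and $|1-2|=1$ to $M^{\varphi_t}_3(G^{\C})$. Since $G^{\C}$ has $\theta(c_1)+\theta(c_2)-1$ edges, both colourings give $M^{\varphi_t}_2(G^{\C})=2(\theta(c_1)+\theta(c_2)-1)$ and $M^{\varphi_t}_3(G^{\C})=\theta(c_1)+\theta(c_2)-1$, whence the minimum and maximum coincide with these values. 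This agrees with items (ii) and (iii) of the cited tree theorem.
\end{proof}
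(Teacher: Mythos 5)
Your proof is correct and follows the same route the paper intends: since for $\ell=2$ the embodiment $G^{\C}$ is a tree of order $n=\theta(c_1)+\theta(c_2)$, the corollary is just the cited tree theorem from \cite{KSM} with that substitution (the paper gives no further argument). Your extra observations — the direct bipartite edge count for (ii) and (iii), and the caveat that the cited theorem formally assumes $n\geq 4$ — only add detail the paper omits.
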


\begin{prop}\label{Prop-3.3}
{\rm \cite{KSM}}. For complete $r$-partite graphs, $r\geq 2$, we have
\begin{enumerate}\itemsep0mm 
\item[(i)] $M^{\varphi_t}_1(K_{\underbrace{n,n,n,\ldots,n}_{r-entries}}) = \frac{n}{6}r(r+1)(2r+1)$.
\item[(ii)] $M^{\varphi_t}_2(K_{\underbrace{n,n,n,\ldots,n}_{r-entries}}) =\frac{n^2}{2}\sum\limits_{i=2}^{r}i^2(i-1)$.
\item[(iii)] $M^{\varphi_t}_3(K_{\underbrace{n,n,n,\ldots,n}_{r-entries}}) = n^2\sum\limits_{i=1}^{r-1}i(r-1)$.
\end{enumerate}
\end{prop}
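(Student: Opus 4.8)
The plan is to use the complete symmetry of $K_{n,n,\ldots,n}$ (with $r$ entries) among its chromatic colourings to reduce each of the three indices to an elementary sum over pairs of colour classes. The first step is to observe that in any chromatic colouring each of the $r$ partite classes is monochromatic and distinct classes carry distinct colours, so with $\ell=r$ we have $\theta(c_j)=n$ for every $j$ under every colouring $\varphi_t$; moreover, since each vertex of one class is joined to every vertex of another class, the number of edges between the class coloured $a$ and the class coloured $b$ is exactly $n^2$. Because permuting the colour labels merely permutes the equal-sized classes, all $r!$ colourings give the same value of each index, which is why $M^{\varphi_t}_i$ is written without a $\pm$ superscript.

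For part (i), I would apply the form $M^{\varphi_t}_1(G)=\sum_{j=1}^{\ell}\theta(c_j)\cdot j^2$ of the definition with $\theta(c_j)=n$ for all $j$, obtaining $M^{\varphi_t}_1=n\sum_{j=1}^{r}j^2$, and then invoke the standard closed form $\sum_{j=1}^{r}j^2=\tfrac16 r(r+1)(2r+1)$.

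For part (ii), grouping the edges of $K_{n,n,\ldots,n}$ by the unordered pair of colours on their endpoints gives $M^{\varphi_t}_2=n^2\sum_{1\le a<b\le r}ab$. Fixing the larger index $b=i$ and summing $a$ from $1$ to $i-1$ contributes $i\cdot\tfrac{i(i-1)}{2}=\tfrac{i^2(i-1)}{2}$, and summing over $i=2,\ldots,r$ yields $M^{\varphi_t}_2=\tfrac{n^2}{2}\sum_{i=2}^{r}i^2(i-1)$. For part (iii), the same edge count gives $M^{\varphi_t}_3=n^2\sum_{1\le a<b\le r}(b-a)$, and reindexing by the difference $d=b-a$, for which there are exactly $r-d$ admissible pairs, gives $M^{\varphi_t}_3=n^2\sum_{d=1}^{r-1}d(r-d)$, which is the stated sum (read with summand $i(r-i)$) and equals $\tfrac{n^2}{6}r(r^2-1)$.

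There is no genuine obstacle here: once the adjacency structure is made explicit, all three statements are routine summations. The only points requiring a little care are keeping the colour-permutation symmetry explicit so that the $\varphi_t$-independence is actually proved rather than assumed, and getting the reindexing in part (iii) right, which is also where the closed-form summand must be read as $i(r-i)$.
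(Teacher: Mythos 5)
Your proposal is correct, and it is worth noting that the paper itself offers no proof of this proposition at all: it is simply quoted from \cite{KSM}. Your argument is the natural direct verification -- observe that in any chromatic colouring of $K_{n,n,\ldots,n}$ each part is a colour class of weight $n$, that any two distinct parts are joined by exactly $n^2$ edges, and that permuting colours only permutes equal-sized classes, so all $r!$ colourings give the same value; parts (i) and (ii) then reduce to $n\sum_{j=1}^{r}j^2$ and $n^2\sum_{1\le a<b\le r}ab$ exactly as you compute, and both match the stated formulas.

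The one substantive point is part (iii), and there you are right to distrust the printed summand. Your computation gives $M^{\varphi_t}_3 = n^2\sum_{1\le a<b\le r}(b-a) = n^2\sum_{d=1}^{r-1}d(r-d) = \tfrac{n^2}{6}r(r^2-1)$, whereas the statement as printed reads $n^2\sum_{i=1}^{r-1}i(r-1) = \tfrac{n^2}{2}r(r-1)^2$, and the paper's own Lemma 3.4(ii) confirms that it intends the literal summand $i(r-1)$. The literal version is simply wrong for $r\ge 3$: for $K_{1,1,1}=K_3$ with colours $1,2,3$ one gets $|1-2|+|1-3|+|2-3|=4$, while the printed formula gives $6$. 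So your reading with summand $i(r-i)$ is the correct statement, your proof of it is sound, and the discrepancy propagates to the closed form claimed in Lemma 3.4(ii), which should be $\tfrac{n^2}{6}r(r^2-1)$ rather than $\tfrac{n^2}{2}r(r-1)^2$.
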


Proposition \ref{Prop-3.3} finds immediate application for $G^{\C}_{max~edges}$ if $|\mathcal{C}_i| = |\mathcal{C}_j|$, $\forall i,j$. We also present closed formula for Proposition \ref{Prop-3.3} (ii),(iii).

\begin{lem} For a complete $r$-partite graph $K_{\underbrace{n,n,n,\ldots,n}_{r-entries}}$, we have\\ 
(i) $M^{\varphi_t}_2(K_{\underbrace{n,n,n,\ldots,n}_{r-entries}}) = \frac{n^2[2r^4 - r(2r+1)(r+3)]}{24}$.\\
(ii) $M^{\varphi_t}_3(K_{\underbrace{n,n,n,\ldots,n}_{r-entries}}) = \frac{n^2r(r-1)^2}{2}$.
\end{lem}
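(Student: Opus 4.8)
The plan is to read off both identities directly from Proposition \ref{Prop-3.3} by substituting the three elementary power-sum formulas
\[
\sum_{i=1}^{m} i = \frac{m(m+1)}{2}, \qquad \sum_{i=1}^{m} i^2 = \frac{m(m+1)(2m+1)}{6}, \qquad \sum_{i=1}^{m} i^3 = \frac{m^2(m+1)^2}{4},
\]
and then collapsing the resulting rational expression. So the whole argument is: quote Proposition \ref{Prop-3.3}, insert closed forms, simplify.

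For part (ii) this is immediate. Start from Proposition \ref{Prop-3.3}(iii), $M^{\varphi_t}_3(K_{n,n,\ldots,n}) = n^2\sum_{i=1}^{r-1} i(r-1)$. Since the factor $(r-1)$ is independent of the summation index it pulls out of the sum, leaving $n^2(r-1)\sum_{i=1}^{r-1} i = n^2(r-1)\cdot\frac{(r-1)r}{2} = \frac{n^2 r(r-1)^2}{2}$, which is exactly the claimed formula. No obstacle here beyond recognising the arithmetic series.

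For part (i), start from Proposition \ref{Prop-3.3}(ii), $M^{\varphi_t}_2(K_{n,n,\ldots,n}) = \frac{n^2}{2}\sum_{i=2}^{r} i^2(i-1)$. First rewrite the summand as $i^2(i-1) = i^3 - i^2$; because this vanishes at $i=1$, the lower limit may be lowered to $i=1$, so $\sum_{i=2}^{r} i^2(i-1) = \sum_{i=1}^{r} i^3 - \sum_{i=1}^{r} i^2 = \frac{r^2(r+1)^2}{4} - \frac{r(r+1)(2r+1)}{6}$. Putting this over the common denominator $12$ and multiplying by $\frac{n^2}{2}$ gives $\frac{n^2}{24}\bigl[3r^2(r+1)^2 - 2r(r+1)(2r+1)\bigr]$, after which one factors $r(r+1)$ out of the bracket, expands the remaining quadratic factor, and regroups to match the target expression.

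The only step with any content is the polynomial bookkeeping at the end of part (i), and it is purely routine; the point to watch is regrouping the degree-four polynomial inside the brackets so that it lands in precisely the stated form, which I would confirm by checking the final polynomial identity at two small values of $r$ (e.g.\ $r=2$ and $r=3$) against a direct edge-count of $K_{n,n}$ and $K_{n,n,n}$. Beyond that there is no genuine obstacle: everything reduces to the three standard summation identities plus elementary algebra.
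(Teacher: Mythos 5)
Your route is the same as the paper's own proof: quote Proposition \ref{Prop-3.3}, substitute the standard power-sum identities, and simplify; your part (ii) is correct and complete on those terms. The gap is the final step of part (i). Your algebra up to $\frac{n^2}{24}\bigl[3r^2(r+1)^2-2r(r+1)(2r+1)\bigr]$ is right, and factoring gives $\frac{n^2\,r(r-1)(r+1)(3r+2)}{24}$; but this cannot be ``regrouped to match the target expression,'' because the printed closed form is not equal to it: the stated bracket $2r^4-r(2r+1)(r+3)$ expands to $2r^4-2r^3-7r^2-3r$, whereas yours expands to $3r^4+2r^3-3r^2-2r$. The very check you propose exposes this: for $K_{n,n}$ a direct count gives $M^{\varphi_t}_2=2n^2$ ($n^2$ edges, each contributing $1\cdot 2$), which agrees with your factored expression at $r=2$ (namely $\frac{n^2\cdot 2\cdot 1\cdot 3\cdot 8}{24}=2n^2$), while the displayed formula gives $\frac{n^2(32-50)}{24}=-\frac{3n^2}{4}$; at $r=3$ the direct count is $11n^2$ against the displayed $\frac{3n^2}{2}$. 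So the failure is not in your computation but in asserting that it lands on the stated identity: formula (i) of the lemma as printed is erroneous (the paper's proof is the same substitution argument, and a correct execution of it also yields $\frac{n^2 r(r-1)(r+1)(3r+2)}{24}$), and no amount of regrouping will reconcile the two polynomials. To make your write-up sound, either prove the corrected closed form $\frac{n^2 r(r-1)(r+1)(3r+2)}{24}$ and flag the discrepancy, or stop at the factored expression and exhibit the $r=2$ evaluation as a counterexample to the printed right-hand side.

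One further caution about your verification plan: for part (ii), checking against a direct edge count of $K_{n,n,n}$ would also raise a flag, but for a different reason. A direct count of colour differences in $K_{n,n,n}$ gives $n^2(1+2+1)=4n^2$, whereas Proposition \ref{Prop-3.3}(iii) gives $n^2\sum_{i=1}^{2}2i=6n^2$, which your (and the paper's) simplification $\frac{n^2 r(r-1)^2}{2}$ reproduces. Your derivation of (ii) from the quoted proposition is therefore faithful, but if your goal is only to verify the simplification, compare against the proposition itself rather than against the graph, since the quoted formula and the direct count already disagree at $r=3$.
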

\begin{proof} 
We proof the results through substitution.

(i) Since $\frac{n^2}{2}\sum\limits_{i=2}^{r}i^2(i-1) = \frac{n^2}{2}[\sum\limits_{i=2}^{r}i^3 - \sum\limits_{i=2}^{r}i^2] = \frac{n^2}{2}[\sum\limits_{i=2}^{r}i^3 - \sum\limits_{i=2}^{r}i^2] = \frac{n^2}{2}[\sum\limits_{i=1}^{r}i^3 - \sum\limits_{i=1}^{r}i^2]$ and 
$\sum\limits_{i=1}^{r}i^3 = (\frac{r(r+1)}{2})^2$ and $\sum\limits_{i=1}^{r}i^2 = \frac{r(r+1)(2r+1)}{6}$, the result follows through substitution and simplification.

(ii) Since $n^2\sum\limits_{i=1}^{r-1}i(r-1) = n^2(r-1)\sum\limits_{i=1}^{r-1}i$ and $\sum\limits_{i=1}^{r-1}i =\frac{r(r-1)}{2}$, the result follows.
\end{proof}

Note that both Type-I and II construction of a graph $G$ with minimum edges which allows $\C$ as a proper colouring have $\chi(G) \leq \ell$. This will be true for any other construction type. Therefore, each construction type is extended to a well defined  complete subgraph to obtain $G^{\C}$, except for the case $\ell = 2$.  From \cite{KSM} the chromatic Zagreb indices for complete graphs are known.

\begin{thm}
Any two graphs $G^{\C}_1$, $G^{\C}_2$ which allow the colour cluster $\C$ as a chromatic colouring have $M^{\C^-}_1(G^{\C}_1) = M^{\C^-}_1(G^{\C}_2)$ and $M^{\C^+}_1(G^{\C}_1) = M^{\C^+}_1(G^{\C}_2)$.
\end{thm}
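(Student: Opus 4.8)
The plan is to show that the first chromatic Zagreb index, in both its minimum and maximum versions, depends only on the colour cluster $\C$ and not on which graphical embodiment realises it. The key observation is that $M^{\varphi_t}_1$, by its very definition in equation (3.5), equals $\sum_{j=1}^{\ell}\theta(c_j)\cdot j^2$, which is a function of the colour weights $\theta(c_j)$ and the colour subscripts alone --- it makes no reference to the edge set of the graph at all. Since both $G^{\C}_1$ and $G^{\C}_2$ realise the same colour cluster $\C$, they have the same multiset of colour weights $\{\theta(c_1),\theta(c_2),\ldots,\theta(c_\ell)\}$ and the same chromatic number $\chi(G^{\C}_1)=\chi(G^{\C}_2)=\ell$.

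The steps I would carry out are as follows. First, fix an arbitrary one of the $\ell!$ minimum parameter colourings $\varphi_t$; under such a colouring each colour $c_j$ is used exactly $\theta(c_j)$ times in \emph{any} graph realising $\C$, because permuting colour labels only permutes which class receives which subscript. Hence $M^{\varphi_t}_1(G^{\C}_1)=\sum_{j=1}^{\ell}\theta(c_{\sigma_t(j)})\cdot j^2 = M^{\varphi_t}_1(G^{\C}_2)$, where $\sigma_t$ is the permutation associated with $\varphi_t$. Second, since this equality holds for every $t$, taking the minimum over $t\in\{1,\ldots,\ell!\}$ on both sides gives $M^{\varphi^-}_1(G^{\C}_1)=M^{\varphi^-}_1(G^{\C}_2)$, and taking the maximum gives $M^{\varphi^+}_1(G^{\C}_1)=M^{\varphi^+}_1(G^{\C}_2)$. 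Alternatively, and more in the spirit of the excerpt, one can invoke Lemma \ref{Lem-3.1} directly: starting from the Type-I tree $G_1$ and the Type-II tree $G_2$ guaranteed by Lemma \ref{Lem-2.1}, each edge added to form the required $K_\ell$ joins vertices of distinct colours, so repeated application of Lemma \ref{Lem-3.1} shows $M^{\varphi_t}_1$ is unchanged along the whole construction; one then only needs that the two starting trees, having the same colour weights, already agree on $M^{\varphi_t}_1$, which is immediate from (3.5).

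I do not expect a genuine obstacle here: the statement is essentially a corollary of the definition together with Lemma \ref{Lem-3.1}. The only point requiring a little care is the bookkeeping of colour permutations --- making sure that when we say ``the same $\ell!$ colourings'' we are comparing $\varphi_t$ on $G^{\C}_1$ with the \emph{corresponding} colouring on $G^{\C}_2$, so that the minimum (resp.\ maximum) is taken over matching index sets. Once that correspondence is set up, the equality of the index values is forced term by term, and the result follows. It is worth remarking, as the paper implicitly does, that the analogous statement fails for $M_2$ and $M_3$, since those indices genuinely depend on the edge set; this is why the theorem is stated only for $M_1$.
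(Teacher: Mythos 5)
Your proposal is correct and matches the paper's argument, which simply observes that the result is a direct consequence of the definition: $M^{\varphi_t}_1$ depends only on the colour weights $\theta(c_j)$ and the subscripts, not on the edge set, so any two embodiments of the same cluster $\C$ agree term by term and hence on the minimum and maximum. Your extra route via Lemma \ref{Lem-3.1} is fine but unnecessary padding over what the paper (and your first paragraph) already establishes.
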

\begin{proof}
The result is a direct consequence of the definition.
\end{proof}

The following theorem discusses whether a thorn complete graph can be the graphical embodiment of a colour cluster.

\begin{thm}\label{Thm-3.7}
For a colour cluster $\C = (\mathcal{C}_i)\ 1\leq i \leq \ell$, any thorn complete graph $K^\star_\ell$ of order $\sum\limits_{i=1}^{\ell}|\mathcal{C}_i|$ corresponds to a graphical embodiment of the colour cluster with $\min\{\varepsilon(K^\star_\ell)\}$.
\end{thm}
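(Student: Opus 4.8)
The plan is to verify that a thorn complete graph $K^\star_\ell$ on $n:=\sum_{i=1}^{\ell}r_i$ vertices (with its thorns placed appropriately) is a connected graph that admits $\C$ as a chromatic colouring and whose number of edges equals $\min\{\varepsilon(G^{\C})\}$; these two facts together say it is a graphical embodiment of $\C$ with the minimum possible number of edges. Realise $K^\star_\ell$ as a core $K_\ell$ on vertices $u_1,u_2,\ldots,u_\ell$ together with $t_i\geq 0$ pendant vertices (thorns) hanging from $u_i$, so that $\sum_{i=1}^{\ell}(1+t_i)=n$, i.e.\ $\sum_{i=1}^{\ell}t_i=n-\ell$. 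The edge count is then immediate: $\varepsilon(K^\star_\ell)=\binom{\ell}{2}+(n-\ell)=(n-1)+\tfrac12(\ell-1)(\ell-2)$. On the other hand, Lemma~\ref{Lem-2.1} gives a spanning tree on $n$ vertices realising $\C$ as a proper colouring with $n-1$ edges, and the Type-I (equivalently Type-II) construction of Section~2 shows that $\tfrac12(\ell-1)(\ell-2)$ further edges turn the induced star $\langle v_{1,1},\ldots,v_{\ell,1}\rangle$ into a $K_\ell$ and so promote the colouring to a chromatic one; hence $\min\{\varepsilon(G^{\C})\}=(n-1)+\tfrac12(\ell-1)(\ell-2)=\varepsilon(K^\star_\ell)$.

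It remains to place the thorns so that $K^\star_\ell$ realises $\C$. Following the running convention take $r_1\geq r_2\geq\cdots\geq r_\ell$, and set $t_1=\sum_{j=2}^{\ell}(r_j-1)$, $t_2=r_1-1$, and $t_i=0$ for $3\leq i\leq\ell$; this respects $\sum_i t_i=n-\ell$. Colour $u_i\mapsto c_i$; colour all $r_1-1$ thorns at $u_2$ with $c_1$; and partition the $t_1$ thorns at $u_1$ into blocks of sizes $r_2-1,r_3-1,\ldots,r_\ell-1$, colouring the block associated with index $j$ by $c_j$. This colouring is proper: the core receives the $\ell$ distinct colours $c_1,\ldots,c_\ell$, a thorn at $u_1$ is never given $c_1=c(u_1)$, and a thorn at $u_2$ is given $c_1\neq c_2=c(u_2)$. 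Its colour weights are $\theta(c_1)=1+(r_1-1)=r_1$ (the vertex $u_1$ and the thorns at $u_2$) and $\theta(c_j)=1+(r_j-1)=r_j$ for $2\leq j\leq\ell$ (the vertex $u_j$ and the $j$th block of thorns at $u_1$). Since $K_\ell\subseteq K^\star_\ell$ forces $\chi(K^\star_\ell)\geq\ell$, this $\ell$-colouring is chromatic, $\chi(K^\star_\ell)=\ell$, and its colour cluster is $\C$. Combining this with the edge count, $K^\star_\ell$ is a connected graphical embodiment of $\C$ with $\min\{\varepsilon(G^{\C})\}$ edges.

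The one point that needs care — and the reason the thorn counts are chosen as above rather than simply $t_i=r_i-1$ — is that a thorn attached to $u_i$ is adjacent to $u_i$ and hence cannot be coloured $c_i$, so the $r_i-1$ ``surplus'' vertices of the colour class $\mathcal{C}_i$ must be carried by thorns hanging from \emph{other} core vertices; routing the surplus of $\mathcal{C}_1$ onto $u_2$ and the surplus of each $\mathcal{C}_j$, $j\geq 2$, onto $u_1$ accomplishes exactly this. Everything else is routine bookkeeping, and the edge count is automatic. (Reading the statement for literally every thorn complete graph of order $n$ would additionally require noting that not every thorn distribution admits a proper colouring with the prescribed weights, so the substantive content is that a suitable distribution exists, while all such graphs share the minimal edge count $(n-1)+\tfrac12(\ell-1)(\ell-2)$.)
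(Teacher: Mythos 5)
Your proposal is correct and takes essentially the same route as the paper's much terser proof: colour the core $K_\ell$ with the $\ell$ distinct colours and load the surplus $|\mathcal{C}_i|-1$ vertices of each class onto pendant vertices, the edge count $\binom{\ell}{2}+(n-\ell)=(n-1)+\tfrac12(\ell-1)(\ell-2)$ matching that of the completed Type-I/II constructions. You also supply two details the paper glosses over: the explicit thorn placement that prevents a pendant from needing its support vertex's colour, and the correct observation that ``any'' thorn complete graph cannot be read literally (e.g.\ $P_4$, a thorn graph of $K_2$, does not realise $\C=((c_1,c_1,c_1),(c_2))$ since every proper $2$-colouring of $P_4$ has classes of sizes $2$ and $2$), so the substantive content is the existence of a suitable thorn distribution.
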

\begin{proof}
Since, $\chi(K^\star_\ell) = \ell$ and a minimum number of pendant vertices exist to allocate exactly $|\mathcal{C}_i| - 1$, $1\leq i \leq \ell$ colours to pendant vertices, the result follows immediately.
\end{proof}

Note that Theorem \ref{Thm-3.7} provides a sufficient condition only. We know that all odd cycles $C_n,\  n\geq 5$, have chromatic number $3$. These cycles are triangle-free and have $\min\{\varepsilon(C_n)\}$ for the corresponding colour cluster $(\underbrace{(c_1,c_1,\ldots,c_1)}_{t~entries},\underbrace{(c_2,c_2,\ldots,c_2)}_{t~entries},(c_3))$, where $n = 2t + 1$. 

We extend this study by analysing the chromatic Zagreb indices for the Type-I and II constructions for a colour cluster $\C = (\mathcal{C}_1,\mathcal{C}_2,\mathcal{C}_3,\ldots,\mathcal{C}_\ell)$ with $|\mathcal{C}_i|\geq |\mathcal{C}_{i+1}|$, $1 \leq i \leq \ell -1$.

\subsection{Analysis of chromatic Zagreb indices for $G^{\C}_1$}

\begin{prop}
For the colour cluster $\C$ the graph $G_1$ obtained from the Type-I construction has
\begin{eqnarray*}
(i)\ M^{\C^-}_1(G_1) & = & \sum\limits_{i=1}^{\ell}\theta(c_i)\cdot i^2;\\
    M^{\C^+}_1(G_1) & = & \sum\limits_{i=1}^{\ell}\theta(c_i)\cdot (\ell -(i-1))^2.\\
(ii)\ M^{\C^-}_2(G_1) & = & 2(\ell-1) + \sum\limits_{i=2}^{\ell}\theta(c_i)\cdot i;\\
M^{\C^+}_2(G_1) & = & \ell(\ell-1)^2 +\sum\limits_{i=2}^{\ell}\theta(c_i)\cdot \ell(\ell-(i-1)).\\
(iii)\ M^{\C^-}_3(G_1) & = & \ell-1 +\sum\limits_{i=2}^{\ell}\theta(c_i)\cdot (i-1);\\
M^{\C^+}_3(G_1) & = & (\ell -1)^2 +\sum\limits_{i=2}^{\ell}\theta(c_i)\cdot (\ell-(i-1)).
\end{eqnarray*}
\end{prop}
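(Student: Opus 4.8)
The plan is to analyse the Type-I construction directly from the explicit edge set, using the structural description given in the proof of Lemma~\ref{Lem-2.1} and the extension to a complete subgraph $K_\ell$ on the representative vertices $v_{1,1},v_{2,1},\ldots,v_{\ell,1}$. First I would fix the initiation colouring $c(v_{i,k}) = c_i$ for all $1\le k\le r_i$, which is the colouring achieving the minimum indices, and then handle the maximum via the colour permutation $c_i\mapsto c_{\ell-(i-1)}$, i.e. reversing the order of the colour classes; since $M_1^{\varphi_t}$ depends only on colour subscripts, Lemma~\ref{Lem-3.1} lets me ignore the added $K_\ell$ edges entirely for part~(i), while for parts (ii) and (iii) I must keep track of those edges.

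For part~(i): by the first-index formula $M_1^{\varphi_t}(G) = \sum_{j=1}^\ell \theta(c_j)\cdot j^2$, the minimum is attained exactly when the largest colour class gets the smallest subscript, which is precisely the initiation ordering $|\mathcal{C}_i|\ge|\mathcal{C}_{i+1}|$; this is a standard rearrangement-inequality argument (pair the decreasing sequence $\theta(c_i)$ with the increasing sequence $i^2$), giving $M_1^{\C^-}(G_1) = \sum_{i=1}^\ell \theta(c_i) i^2$. The maximum reverses this: pair $\theta(c_i)$ with $(\ell-(i-1))^2$, which is the value of $j^2$ after the reversing permutation, yielding the stated formula. For parts~(ii) and~(iii) I would enumerate the edges of $G_1$ in two groups: (a) the $\binom{\ell}{2}$ edges of the complete subgraph $K_\ell$ on $\{v_{i,1}\}$, contributing $\sum_{1\le a<b\le\ell} ab$ to $M_2$ and $\sum_{1\le a<b\le\ell}(b-a)$ to $M_3$ under the initiation colouring; and (b) the pendant edges attaching each $v_{i,k}$, $k\ge 2$, to its neighbour in the tree. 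In the Type-I tree, the $r_i-1$ extra vertices of class $\mathcal{C}_i$ for $i\ge 2$ are each joined to $v_{1,1}$ (colour $c_1$), while the $r_1-1$ extra vertices of class $\mathcal{C}_1$ are joined to $v_{2,1}$ (colour $c_2$). So the pendant contribution to $M_2$ is $\sum_{i=2}^\ell (\theta(c_i)-1)\cdot(1\cdot i) + (\theta(c_1)-1)\cdot(2\cdot 1)$, and to $M_3$ it is $\sum_{i=2}^\ell(\theta(c_i)-1)(i-1) + (\theta(c_1)-1)\cdot 1$; combining these with the $K_\ell$-edge sums and simplifying (the $K_\ell$ part for $M_2$ and $M_3$ under the natural colouring telescopes neatly since the complete graph on $\{1,\ldots,\ell\}$ with colour $=$ index has known index values from \cite{KSM}) should produce the claimed closed forms. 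The maximum cases use the reversing permutation and the same bookkeeping, noting that on $K_\ell$ reversing the labels leaves $M_2$ and $M_3$ of the clique unchanged (it is a symmetry of $K_\ell$'s index), so the change is confined to the pendant edges, where colour $1$ becomes $\ell$ and colour $i$ becomes $\ell-(i-1)$.

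The main obstacle I anticipate is the bookkeeping for parts (ii) and (iii): one must correctly separate the $\binom{\ell}{2}$ clique edges from the two distinct families of pendant edges (those attached to $v_{1,1}$ versus the $r_1-1$ attached to $v_{2,1}$), get the endpoint colours right in each family, and verify that the clique contribution combines with the pendant contribution to exactly match the stated right-hand sides after simplification — in particular checking the boundary terms at $i=2$ and the $\theta(c_1)-1$ terms do not produce stray constants. A secondary subtlety is confirming that the initiation ordering genuinely minimises $M_2$ and $M_3$ (and the reversal maximises them) rather than merely being a convenient choice; for $M_1$ this is rearrangement, but for $M_2,M_3$ over this particular tree-plus-clique structure it requires a short separate argument that no other permutation of colour classes does better, which I would carry out by a local swap / exchange argument on adjacent colour classes.
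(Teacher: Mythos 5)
Your overall strategy (enumerate edges under the initiation colouring, then apply a colour permutation for the maxima) is the same as the paper's, but you are computing the indices of the wrong graph. This proposition concerns $G_1$, the tree produced by the Type-I construction in Lemma~\ref{Lem-2.1}; the $\frac{1}{2}(\ell-1)(\ell-2)$ clique edges on $v_{1,1},v_{2,1},\ldots,v_{\ell,1}$ are only added afterwards to form $G^{\C}_1$, which is the subject of the corollary that follows this proposition. Your edge decomposition --- $\binom{\ell}{2}$ clique edges, plus $\theta(c_i)-1$ pendant edges per class $i\ge 2$, plus $\theta(c_1)-1$ edges at $v_{2,1}$ --- describes $G^{\C}_1$, not $G_1$: in the tree all $\theta(c_i)$ vertices of class $i\ge 2$ are joined to $v_{1,1}$, and the representatives induce a star, not $K_\ell$. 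Carried out, your plan yields the corollary's formulas, with the extra double sums $\sum_{a<b}a\cdot b$ and $\sum_{a<b}(b-a)$, which cannot ``simplify'' to the stated right-hand sides, since those contain no clique contribution. (Incidentally, your tree-edge bookkeeping produces first terms of the form $2(\theta(c_1)-1)$ and $\theta(c_1)-1$, which is what the paper's own proof text and its later specializations support; the printed $2(\ell-1)$, $\ell-1$, $\ell(\ell-1)^2$, $(\ell-1)^2$ agree with this only when $\theta(c_1)=\ell$.)

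A second, independent failure is your use of the reversal $c_i\mapsto c_{\ell-(i-1)}$ for all three maxima. It is fine for $M_1$ (rearrangement) and it does give the stated form for $M_2$, but it cannot maximise the third index here: every edge of $G_1$ joins a vertex of colour $1$ to a vertex of some colour $i\ge 2$, and after reversal the pair $(1,i)$ becomes $(\ell,\ell-i+1)$ with the same absolute difference, so your ``maximum'' in part (iii) equals the minimum and cannot produce $(\ell-1)^2+\sum_{i=2}^{\ell}\theta(c_i)\cdot(\ell-(i-1))$ (the same invariance holds on the clique edges, so including them does not help). The paper uses a different map for the third index --- its proof invokes the shift $c_\ell\mapsto c_1$, $c_i\mapsto c_{i+1}$, and the stated value corresponds to keeping colour $1$ on the class of $v_{1,1}$ while reversing the remaining colours --- so this part needs a separate argument, not the same permutation as $M_2$. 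Your closing observation that extremality over all $\ell!$ colourings requires an exchange argument is a fair point (the paper merely asserts its maps), but that extra care does not repair the two issues above.
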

\begin{proof}
\begin{enumerate}
\item[(i)] Follows directly from the definition of the first chromatic Zagreb index.

\item[(ii)] Each edge $v_{2,1}v_{i,j}$, $2\leq j\leq r_1$ has product value of 2. Therefore the first term. The second term follows by similar reasoning in respect of each colour $c_i$, $2\leq i \leq \ell$. Therefore the second term hence, the result for the minimum second chromatic Zagreb index follows. The maximum second chromatic Zagreb index follows similarly in respect of the map $c_i \mapsto c_{\ell-(i-1)}$.

\item[(iii)] The minimum third chromatic Zagreb index follows by similar reasoning as in part (ii) and the maximum third chromatic Zagreb index follows in respect of the map $c_\ell\mapsto c_1, c_1\mapsto c_2,c_2\mapsto c_3 \ldots c_{\ell-2}\mapsto c_{\ell-1}$.
\end{enumerate}
\end{proof}

\begin{cor}
For the colour cluster $\C$, the graph $G^{\C}_1$ has:
\begin{eqnarray*}
(i)\ M^{\C^-}_1(G^{\C}_1) & = & \sum\limits_{i=1}^{\ell}\theta(c_i)\cdot i^2;\\
M^{\C^+}_1(G^{\C}_1) & = & \sum\limits_{i=1}^{\ell}\theta(c_i)\cdot (\ell -(i-1))^2.\\
(ii)\ M^{\C^-}_2(G^{\C}_1) & = & 2(\ell-1) + \sum\limits_{i=2}^{\ell}\theta(c_i)\cdot i +\sum\limits_{j=2}^{\ell-1}\sum\limits_{i=j+1}^{\ell}j\cdot i;\\
M^{\C^+}_2(G^{\C}_1) & = & \ell(\ell-1)^2 +\sum\limits_{i=2}^{\ell}\theta(c_i)\cdot \ell(\ell-(i-1))+\sum\limits_{j=1}^{\ell-2}\sum\limits_{i=j+1}^{\ell -1}j\cdot i.\\
(iii)\ M^{\C^-}_3(G^{\C}_1) & = & \ell-1 +\sum\limits_{i=2}^{\ell}\theta(c_i)\cdot (i-1)+\sum\limits_{j=2}^{\ell-2}\sum\limits_{i= j+1}^{\ell-1}i\cdot (\ell+1- 2i);\\
M^{\C^+}_3(G^{\C}_1) & = & (\ell -1)^2 +\sum\limits_{i=2}^{\ell}\theta(c_i)\cdot (\ell-(i-1))+\sum\limits_{j=2}^{\ell-2}\sum\limits_{i=j+1}^{\ell-1}i\cdot (\ell+1- 2i).
\end{eqnarray*}
\end{cor}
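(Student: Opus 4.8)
The plan is to derive each of the six formulas for $G^{\C}_1$ by starting from the corresponding formula for $G_1$ (the tree from the Type-I construction), which the preceding proposition already supplies, and then accounting for exactly the edges added when we pass from $G_1$ to $G^{\C}_1$. Recall that $G^{\C}_1$ is obtained from $G_1$ by taking the induced star $\langle v_{1,1},v_{2,1},\ldots,v_{\ell,1}\rangle$ and completing it to a $K_\ell$; the edges already present in $G_1$ among those $\ell$ representative vertices are precisely $v_{1,1}v_{i,1}$ for $2\le i\le \ell$, so the newly added edges are exactly the pairs $v_{i,1}v_{j,1}$ with $2\le i<j\le \ell$. Thus $M^{\C}_1$ is unchanged (invoke Lemma~\ref{Lem-3.1}, applied $\binom{\ell-1}{2}$ times), which gives part (i) immediately; and for $M_2$ and $M_3$ the contribution of $G^{\C}_1$ equals the contribution of $G_1$ plus the sum of the edge-terms over the new edges.

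The core computation is therefore just to evaluate, under the relevant colour assignment, the sum over new edges $\{v_{i,1}v_{j,1}: 2\le i<j\le\ell\}$. For the \emph{minimum} second index, under the identity colouring $c(v_{i,1})=c_i$, each new edge contributes the product $i\cdot j$, so the extra term is $\sum_{j=2}^{\ell-1}\sum_{i=j+1}^{\ell} j\cdot i$, matching the stated formula; one should double-check that this added term does not disturb the minimality already established for $G_1$, i.e. that the identity colouring still simultaneously minimizes the tree part and the clique part — this is clear because on the clique $K_\ell$ the products $i\cdot j$ are minimized (over all injections of $\{1,\ldots,\ell\}$) by using the smallest labels, which the identity does. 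For the \emph{maximum} second index one uses the reversal map $c_i\mapsto c_{\ell-(i-1)}$ already named in the proposition's proof: the tree part becomes $\ell(\ell-1)^2+\sum_{i=2}^{\ell}\theta(c_i)\cdot\ell(\ell-(i-1))$ as before, and on the clique the representative $v_{i,1}$ now carries colour $\ell-(i-1)$, so reindexing turns $\sum i\cdot j$ over new edges into $\sum_{j=1}^{\ell-2}\sum_{i=j+1}^{\ell-1} j\cdot i$ as claimed. For $M_3$, the analogous bookkeeping applies: under the identity colouring a new edge $v_{i,1}v_{j,1}$ ($i<j$) contributes $j-i$, and one must re-sum $\sum_{2\le i<j\le\ell}(j-i)$ and verify it collapses to $\sum_{j=2}^{\ell-2}\sum_{i=j+1}^{\ell-1} i\,(\ell+1-2i)$ — this is the one genuinely fiddly algebraic identity in the argument, obtained by swapping the order of summation and using $\sum (j-i)=\sum_{\text{fixed }i}\sum_{j>i}(j-i)$. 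The maximum third index uses the cyclic shift $c_\ell\mapsto c_1,\ c_k\mapsto c_{k+1}$ from the proposition; crucially this permutation leaves the clique-edge contribution $\sum|c(v_{i,1})-c(v_{j,1})|$ invariant (it is a relabelling of a complete graph, whose $M_3$ depends only on the multiset of colours used, which is $\{1,\ldots,\ell\}$ either way), so the clique adds the \emph{same} term $\sum_{j=2}^{\ell-2}\sum_{i=j+1}^{\ell-1} i\,(\ell+1-2i)$ in the maximum case, and only the tree part changes to $(\ell-1)^2+\sum_{i=2}^{\ell}\theta(c_i)\cdot(\ell-(i-1))$.

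The main obstacle I anticipate is not conceptual but verificational: one must confirm that the colour map chosen to optimize the \emph{tree} part $G_1$ is the \emph{same} map that optimizes the full graph $G^{\C}_1$, rather than two different optima being in tension. For $M_1$ this is automatic by Lemma~\ref{Lem-3.1}. For $M_2$ it works because both the tree-weight term (which wants large colours on the high-degree apex/low-weight classes and is handled by the $\theta(c_i)\ge\theta(c_{i+1})$ ordering together with the chosen map) and the clique term (which on $K_\ell$ simply wants the smallest, resp. largest, labels available — but only $\ell$ labels exist, so the label \emph{set} on the clique is forced and only its distribution among $v_{2,1},\ldots,v_{\ell,1}$ versus $v_{1,1}$ varies) are governed by the ordering assumption, so no conflict arises. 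For $M_3$ the clique term is a permutation invariant, which removes any possible tension outright; hence the extremizing colouring is dictated entirely by the tree part, exactly as in the $G_1$ proposition, and the stated formulas follow by adding the (now map-independent for $M_3$, map-dependent-but-consistent for $M_2$) clique contribution. I would therefore structure the proof as: (i) cite Lemma~\ref{Lem-3.1}; (ii) identify the added edge set; (iii) for each of $M_2^{\C^\pm}$ and $M_3^{\C^\pm}$, write the total as "tree value (from the proposition) $+$ sum over added edges", evaluate the latter sum under the named colouring, and note the consistency of the extremizing colouring, leaving the two short double-sum simplifications to the reader.
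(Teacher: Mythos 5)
Your overall strategy --- part (i) via Lemma~\ref{Lem-3.1}, and for $M_2$, $M_3$ writing the value as the $G_1$ (tree) value from the preceding proposition plus the contribution of the $\binom{\ell-1}{2}$ edges added among $v_{2,1},\ldots,v_{\ell,1}$ --- is exactly the paper's own (its proof is the single sentence that the additional sum terms come from the complete graph on the representative vertices). But two steps in your write-up genuinely fail. First, your invariance argument for $M^{\C^+}_3$ conflates the full clique $K_\ell$ with the set of \emph{added} edges: the added edges span only the $\ell-1$ vertices $v_{2,1},\ldots,v_{\ell,1}$, whose colour multiset is $\{2,3,\ldots,\ell\}$ under the identity colouring but $\{1,3,4,\ldots,\ell\}$ under the cyclic shift $c_\ell\mapsto c_1,\ c_i\mapsto c_{i+1}$; the two sums of pairwise differences differ by $\ell-2$ (for $\ell=4$: $4$ versus $6$), so the clique term is not map-independent as you claim. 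The same conflation undermines your consistency check for $M^{\C^-}_2$: under the identity colouring the added clique carries the labels $2,\ldots,\ell$ and \emph{excludes} the smallest label $1$ (which sits on $v_{1,1}$), so the assertion that the identity ``uses the smallest labels'' on the clique, and hence simultaneously optimises the tree part and the clique part, is not established --- optimising the clique part alone would instead dictate which colour is placed on $v_{1,1}$, and this can conflict with the map that optimises the tree part (the analogous tension arises for $M^{\C^+}_2$ under the reversal map). This trade-off is precisely the point you flagged as the main obstacle, and it is not resolved by the argument given.

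Second, the ``fiddly algebraic identity'' you defer is not an identity. Under the identity colouring the added-edge contribution to $M_3$ is $\sum_{2\le i<j\le\ell}(j-i)=\sum_{d=1}^{\ell-2}d(\ell-1-d)$, which for $\ell=4$ equals $4$, whereas $\sum_{j=2}^{\ell-2}\sum_{i=j+1}^{\ell-1}i(\ell+1-2i)$ equals $3(5-6)=-3$; indeed the latter expression is negative for all $\ell\ge 4$ and so cannot equal a sum of absolute differences. Hence the proposal cannot be closed in the form written: either the added term must be re-derived as $\sum_{d=1}^{\ell-2}d(\ell-1-d)$ (at which point the min/max discrepancy of the previous paragraph must also be confronted, since the added term then genuinely depends on the colour map), or the mismatch with the displayed formula --- which appears to originate in the statement itself --- must be identified rather than asserted to ``collapse''. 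Deferring a verification is only legitimate when the deferred claim is in fact true; here a one-line numerical check at $\ell=4$ would have exposed the problem.
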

\begin{proof}
For parts (i), (ii) and (iii), the additional sums terms follow from the known results for the corresponding complete graph on vertices $v_{j,1}$, $2\leq j \leq \ell-1$. 
\end{proof}

\subsection{Analysis of chromatic Zagreb indices for $G^{\C}_2$}

\begin{prop}
For the colour cluster $\C$ the graph $G_2$ obtained from the Type-II construction has
\begin{eqnarray*}
(i)\ M^{\C^-}_1(G_2) & = & \sum\limits_{i=1}^{\ell}\theta(c_i)\cdot i^2;\\
M^{\C^+}_1(G_2) & = & \sum\limits_{i=1}^{\ell}\theta(c_i)\cdot (\ell -(i-1))^2.\\
(ii)\ M^{\C^-}_2(G_2) & = & 2\cdot (\theta(c_1)-1) + \sum\limits_{i=1}^{\ell-1}i(i+1)\cdot \theta(c_{i+1});\\
M^{\C^+}_2(G_2) & = & \ell(\ell-1)\cdot (\theta(c_1)-1) + \sum\limits_{i=1}^{\ell-1}i(i+1)\cdot \theta(c_{(\ell-(i-1))}).\\
(iii)\ M^{\C^-}_3(G_2) & = & \sum\limits_{i=1}^{\ell}\theta(c_i) -\ell;\\
M^{\C^+}_3(G_2) & = & (\theta(c_1)-1)^2 +\sum\limits_{i=2}^{\ell}(\ell- (i-1))\cdot \theta(c_i).
\end{eqnarray*}
\end{prop}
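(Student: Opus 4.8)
The plan is to analyze the Type-II construction edge by edge, exploiting the fact that $G_2$ consists of a path $v_{1,1}v_{2,1}v_{3,1}\cdots v_{\ell,1}$ on the "representative" vertices of each colour class, together with pendant-style edges joining each $v_{i,1}$ to all vertices $v_{i+1,j}$ of the next class, plus the extra edges $v_{2,1}v_{1,j}$ for $2\le j\le r_1$. First I would record the edge set precisely: the edges $v_{i,1}v_{i+1,j}$ for $1\le i\le \ell-1,\ 1\le j\le r_{i+1}$, together with the $r_1-1$ edges $v_{2,1}v_{1,j}$, $2\le j\le r_1$. Part (i) is immediate from the definition of $M_1^{\varphi_t}$ together with Lemma~\ref{Lem-3.1}, exactly as in the Type-I case, since $M_1$ depends only on the multiset of colour subscripts assigned to vertices and $\chi(G_2)=\ell$; the minimum uses the identity ordering $c(v_{i,j})=c_i$ and the maximum uses the reversal $c_i\mapsto c_{\ell-(i-1)}$.

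For (ii), in the initiation colouring $c(v_{i,j})=c_i$, the $r_1-1$ edges $v_{2,1}v_{1,j}$ each contribute product $1\cdot 2 = 2$, giving the first term $2(\theta(c_1)-1)$. For each $i$ with $1\le i\le \ell-1$, the edges $v_{i,1}v_{i+1,j}$, $1\le j\le r_{i+1}$, each join a colour-$c_i$ vertex to a colour-$c_{i+1}$ vertex, contributing product $i(i+1)$ apiece, and there are $\theta(c_{i+1})$ such edges; summing over $i$ yields $\sum_{i=1}^{\ell-1} i(i+1)\theta(c_{i+1})$. For the maximum, apply the relabelling $c_i\mapsto c_{\ell-(i-1)}$: the path edge values become $(\ell-1)\ell,(\ell-2)(\ell-1),\ldots$ and the star-type edge at position $i$ sees the multiplicity $\theta(c_{\ell-(i-1)})$ instead of $\theta(c_{i+1})$; the edges $v_{2,1}v_{1,j}$ now carry product $(\ell-1)\ell$, giving $\ell(\ell-1)(\theta(c_1)-1)$. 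One must verify that this reversal is genuinely optimal among all $\ell!$ relabellings rather than merely a plausible candidate — this is the analogue of the argument implicitly used for $G_1$, and I would justify it by noting that the weight $\theta(c_1)$ is largest, so assigning it the colour of largest subscript (here the endpoint $v_{2,1}$ adjacent to all $v_{1,j}$ effectively places those $r_1-1$ vertices at subscript $1$ when reversed, but the product is dominated by the incident path-end vertex), and more generally the edge-product sum is a Schur-convex-type functional maximised by pairing large weights with large subscripts along the path.

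For (iii), $M_3$ counts $\sum_{e=uv}|s-k|$. In the initiation colouring every edge of $G_2$ joins colour classes whose subscripts differ by exactly $1$: the path edges $v_{i,1}v_{i+1,1}$ have $|i-(i+1)|=1$, the star edges $v_{i,1}v_{i+1,j}$ likewise, and the edges $v_{2,1}v_{1,j}$ have $|2-1|=1$. The total number of edges is $\sum_{i=1}^{\ell}\theta(c_i)-1$ by Lemma~\ref{Lem-2.1}, and each contributes $1$, so $M_3^{\C^-}(G_2)=\sum_{i=1}^\ell \theta(c_i)-1$; I would double-check the stated value $\sum_{i=1}^\ell\theta(c_i)-\ell$ against this — if the claimed formula is $\sum\theta(c_i)-\ell$ then presumably only the $K_\ell$-free skeleton before adding clique edges is meant, or $G_2$ here denotes the pre-completion tree, and the discrepancy is absorbed by the $K_\ell$ extension contributing to $G_2^{\C}$; I would reconcile this by carefully distinguishing $G_2$ (the minimum-edge proper-colouring tree, $\ell-1$ of whose edges are the path) from $G_2^{\C}$. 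For the maximum, the relabelling $c_1\mapsto c_1$ but more aggressively $c_i\mapsto c_{\ell-(i-1)}$ (or the cyclic shift used in the $G_1$ case) makes the star edges at the high-subscript end carry the large differences: a colour-$c_i$ vertex adjacent to the class now labelled $\ell$ or $1$ contributes difference $(\ell-(i-1))$; the $r_1-1$ edges $v_{2,1}v_{1,j}$ contribute $(\theta(c_1)-1)^2$ after the reversal makes their common endpoint have subscript-difference $\ell-1$ from... here I would track the exact relabelling that produces $(\theta(c_1)-1)^2+\sum_{i=2}^\ell(\ell-(i-1))\theta(c_i)$ and confirm each summand.

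\textbf{Main obstacle.} The routine part is the minimum indices, which follow by direct edge-counting in the initiation colouring. The genuine difficulty is proving that the specific relabellings proposed for $M_2^{\C^+}$ and $M_3^{\C^+}$ actually attain the maximum over all $\ell!$ colourings. Since the paper's earlier propositions assert analogous maxima for $G_1$ "in respect of the map $c_i\mapsto c_{\ell-(i-1)}$" without full justification, I would either (a) supply a rearrangement-inequality argument showing the relevant functional is maximised by this monotone pairing of class-weights against colour-subscripts along the backbone path, or (b) follow the paper's evident convention and simply exhibit the map, trusting the reader to verify optimality as in the $G_1$ case. I anticipate the intended proof takes route (b): "The results follow by the same reasoning as in the Type-I case, tracking the edges of $G_2$ and applying the stated colour map."
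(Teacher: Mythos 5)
Your handling of (i) and (ii) coincides with the paper's (definition of $M_1$ for (i); direct edge counting in the initiation colouring for the minima, and the reversal $c_i\mapsto c_{\ell-(i-1)}$ reproducing the stated maximum in (ii)), and like the paper you leave genuine optimality of the maximizing map asserted rather than proved. The genuine gap is at $M^{\C^+}_3(G_2)$: you never exhibit the relabelling, and the two candidates you float cannot work. Under the reversal $c_i\mapsto c_{\ell-(i-1)}$ every edge of $G_2$ still joins consecutively numbered classes, so every edge contributes difference $1$ and you recover the minimum, not the maximum; under the cyclic shift borrowed from the $G_1$ case all edges except those between classes $\ell-1$ and $\ell$ again have difference $1$, which does not match the claimed value either. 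The paper's proof uses a different, alternating map $c_1\mapsto c_\ell,\ c_2\mapsto c_1,\ c_3\mapsto c_{\ell-1},\ c_4\mapsto c_2,\ c_5\mapsto c_{\ell-2},\dots$, under which every edge between classes $i$ and $i+1$ acquires difference $\ell-i$; this yields $(\ell-1)(\theta(c_1)-1)+\sum_{i=2}^{\ell}(\ell-(i-1))\theta(c_i)$, so the printed first term $(\theta(c_1)-1)^2$ is evidently a slip for $(\ell-1)(\theta(c_1)-1)$ — no relabelling of a sum of colour differences can produce a contribution quadratic in $\theta(c_1)$, a point your own sketch stumbles over when it tentatively assigns $(\theta(c_1)-1)^2$ to the $r_1-1$ edges $v_{2,1}v_{1,j}$. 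Without this zigzag assignment, part (iii) max is simply unproved in your write-up.

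Two further remarks. Your count $M^{\C^-}_3(G_2)=\sum_{i=1}^{\ell}\theta(c_i)-1$ is correct: $G_2$ is a tree on $\sum r_i$ vertices, every edge joins consecutive classes in the initiation colouring, and $1$ is a universal per-edge lower bound; the paper's stated $\sum\theta(c_i)-\ell$ equals the number of non-backbone edges only and receives no justification in its proof (which addresses only the maximizing map), so your flag marks an apparent error in the statement rather than something to be reconciled by reinterpreting $G_2$. Finally, concerning optimality of the maximizing maps, the paper offers only the assertion that the chosen map "ensures maximal adjacency differences", so your route (b) is indeed what the paper does; the rearrangement-type argument you outline would strengthen the paper but is not carried out in your proposal.
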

\begin{proof}
\begin{enumerate}\itemsep0mm
\item[(i)] Follows directly from the definition of the first chromatic Zagreb index.

\item[(ii)] The first term follows from the edges $v_{2,1}v_{1,i}$, $2\leq i \leq r_1$. The second term follows directly from the definition of the second chromatic Zagreb index.

\item[(iii)]  Define the map $c_1\mapsto c_\ell, c_2\mapsto c_1, c_3\mapsto c_{\ell-1}, c_4\mapsto c_2, c_5 \mapsto c_{\ell-2}$ and so on. Clearly, this mapping ensures maximal adjacency differences for all combinations of the distinct colours. Therefore the result follows through similar reasoning used in part (ii).
\end{enumerate}
\end{proof}

\begin{cor}
For the colour cluster $\C$, the graph $G^{\C}_2$ has
\begin{eqnarray*}
(i)\ M^{\C^-}_1(G^{\C}_2) & = & \sum\limits_{i=1}^{\ell}\theta(c_i)\cdot i^2;\\
M^{\C^+}_1(G^{\C}_2) & = & \sum\limits_{i=1}^{\ell}\theta(c_i)\cdot (\ell -(i-1))^2.\\
(ii)\ M^{\C^-}_2(G^{\C}_2) & = & 2\cdot (\theta(c_1)-1) + \sum\limits_{i=1}^{\ell-1}i(i+1)\cdot \theta(c_{i+1}) +\sum\limits_{j=1}^{\ell-2}\sum\limits_{i=j+2}^{\ell}j\cdot i;\\
M^{\C^+}_2(G^{\C}_2) & = & \ell(\ell-1)\cdot (\theta(c_1)-1) + \sum\limits_{i=1}^{\ell-1}i(i+1)\cdot \theta(c_{(\ell-(i-1))})+\sum\limits_{j=1}^{\ell-2}\sum\limits_{i=j+2}^{\ell}j\cdot i.\\
(iii)\ M^{\C^-}_3(G^{\C}_2) & = & \sum\limits_{i=1}^{\ell}\theta(c_i) -\ell + \sum\limits_{j=1}^{\ell-1}\sum\limits_{i=1}^{j}i - (\ell-2);\\
M^{\C^+}_3(G^{\C}_2) & = & (\theta(c_1)-1)^2 +\sum\limits_{i=2}^{\ell}(\ell- (i-1))\cdot \theta(c_i) + \sum\limits_{j=1}^{\ell-2}\sum\limits_{i=i}^{j}i.
\end{eqnarray*}
\end{cor}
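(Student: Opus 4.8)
The plan is to view $G^{\C}_2$ as $G_2$ with the edge set $F=\{\,v_{j,1}v_{i,1}:1\le j\le\ell-2,\ j+2\le i\le\ell\,\}$ adjoined, these being the $\tfrac12(\ell-1)(\ell-2)$ chords that promote the induced spine path $\langle v_{1,1},v_{2,1},\dots,v_{\ell,1}\rangle$ of $G_2$ to the clique $K_\ell$, and then to obtain each of the six quantities as ``value for $G_2$'' plus ``contribution of $F$''. Every edge of $F$ joins two vertices lying in different colour classes of $\C$, and this is preserved under every one of the $\ell!$ relabellings; hence for each colouring $\varphi_t$ the colouring stays proper on $G^{\C}_2$, and Lemma~\ref{Lem-3.1}, applied one chord at a time, gives $M^{\varphi_t}_1(G^{\C}_2)=M^{\varphi_t}_1(G_2)$ for every $t$. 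Taking minima and maxima over $t$ then yields part~(i) directly from the preceding Proposition.

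For (ii) and (iii) I would write, for each $\varphi_t$, $M^{\varphi_t}_k(G^{\C}_2)=M^{\varphi_t}_k(G_2)+\Delta^{\varphi_t}_k$, where $\Delta^{\varphi_t}_k$ is the sum over $F$ of the product ($k=2$), resp. the absolute difference ($k=3$), of the colour subscripts carried by the two endpoints of each chord. Since $v_{1,1},\dots,v_{\ell,1}$ always receive $\ell$ pairwise distinct subscripts --- a permutation $\sigma$ of $\{1,\dots,\ell\}$ --- the number $\Delta^{\varphi_t}_k$ depends on $\sigma$ only. Evaluating at the initiation colouring ($\sigma=\mathrm{id}$) gives $\Delta_2=\sum_{j=1}^{\ell-2}\sum_{i=j+2}^{\ell}j\cdot i$ and $\Delta_3=\sum_{j=1}^{\ell-2}\sum_{i=j+2}^{\ell}(i-j)$, and with $\sum_{i=1}^{m}i=\tbinom{m+1}{2}$ these re-index into the double sums displayed in the $M^{\C^-}_2(G^{\C}_2)$ and $M^{\C^-}_3(G^{\C}_2)$ formulas. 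For the maxima I would feed in the same reorderings the Proposition uses to reach $M^{\C^+}_2(G_2)$ and $M^{\C^+}_3(G_2)$, namely $c_i\mapsto c_{\ell-(i-1)}$ and the alternating map $c_1\mapsto c_\ell,\ c_2\mapsto c_1,\ c_3\mapsto c_{\ell-1},\dots$, and recompute $\Delta^{\sigma}_k$; for $k=2$ the pair set $\{(j,i):i\ge j+2\}$ is fixed by $j\leftrightarrow\ell+1-j$, so $\Delta^{\sigma}_2$ again collapses to $\sum_{j}\sum_{i}j\cdot i$, and a short computation settles $k=3$.

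The step I expect to be the genuine obstacle is certifying that the colouring which extremises the $G_2$-part $M^{\varphi_t}_k(G_2)$ also extremises the whole of $M^{\varphi_t}_k(G^{\C}_2)$: in principle a colouring could concede a little on the spine-and-pendant edges of $G_2$ and recoup more on $F$. The clean resolution is to route the argument through the full clique: for any permutation $\sigma$ of $\{1,\dots,\ell\}$ one has $\sum_{uv\in E(K_\ell)}\sigma(u)\sigma(v)=\tfrac12\bigl[(\sum_i i)^2-\sum_i i^2\bigr]$ and $\sum_{uv\in E(K_\ell)}|\sigma(u)-\sigma(v)|=\tbinom{\ell+1}{3}$, both independent of $\sigma$. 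Hence $M^{\varphi_t}_k(G^{\C}_2)$ equals the contribution of the pendant edges of $G_2$ plus a constant, so its extremisers are precisely the extremisers of that pendant-edge contribution --- and since in $G_2$ the pendant edges between classes $i$ and $i{+}1$ behave exactly like the spine edge between them, the very maps the Proposition employs are seen to extremise it. With this compatibility in hand, assembling $M^{\C^-}_k(G_2)$ and $M^{\C^+}_k(G_2)$ from the Proposition together with the $\Delta_k$ terms gives all six formulas, the only remaining labour being the routine re-indexing of the double sums.
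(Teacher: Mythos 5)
Your proof follows the same route as the paper's: the corollary is obtained from the preceding Proposition by adding to the values for $G_2$ the contribution of the $\tfrac{1}{2}(\ell-1)(\ell-2)$ chords that complete the spine path $\langle v_{1,1},\ldots,v_{\ell,1}\rangle$ to $K_\ell$, part (i) being unaffected by these additions (Lemma \ref{Lem-3.1}). Your further observation that the full $K_\ell$ contribution is invariant under all $\ell!$ colourings, so that the extremising maps used for $G_2$ still extremise $G^{\C}_2$, merely makes explicit a compatibility step the paper leaves implicit.
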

\begin{proof}
For parts (i), (ii) and (iii), the additional sums terms follow from the additional edges added, on completing the graph on the path $v_{1,i}$, $1\leq i \leq \ell$. Note that for $M^{\C^-}_3(G^{\C}_2)$ the initial expression obtain is $M^{\C^-}_3(G^{\C}_2) = \sum\limits_{i=1}^{\ell}\theta(c_i) -\ell +\sum\limits_{j=1}^{\ell-2}~(\sum\limits_{i=j+2}^{(i+(\ell-2))\leq \ell}|i-j|)$ which then simplifies to the result. 
\end{proof}

\section{Application to Certain Positive Non-Increasing Finite Integer Sequences}

In this section, we consider two positive non-increasing finite integer sequences. It lays the basis for investigating other such sequences.

\subsection{Positive integers}

The first sequence is defined as a \textit{mirror image} of the positive integers. Let $s_1=\{a_n\}$, $a_i =\ell -(i-1)$, $1\leq i\leq \ell$. Also, let $\theta(c_i) = a_i$. Consider the corresponding colouring cluster $\C = (\mathcal{C}_i),\ 1\leq i \leq \ell$.

\begin{thm}
Let $\C = (\mathcal{C}_i)$, $1\leq i \leq \ell$, and $\theta(c_i) = a_i$, $a_i \in s_1$. Then, for the colour cluster $\C$ the graph $G^{\C}_1$ obtained from the Type-I construction has
\begin{eqnarray*}
(i)\ M^{\C^-}_1(G^{\C}_1)_{s_1} & = & \sum\limits_{i=1}^{\ell}(\ell -(i-1))\cdot i^2;\\
M^{\C^+}_1(G^{\C}_1)_{s_1} & = &\sum\limits_{i=1}^{\ell}(\ell -(i-1))^3.\\
(ii)\ M^{\C^-}_2(G^{\C}_1)_{s_1} & = & 2(\ell-1) + \sum\limits_{i=2}^{\ell}i\cdot (\ell-(i-1))+\sum\limits_{j=2}^{\ell-1}\sum\limits_{i=j+1}^{\ell}j\cdot i;\\
M^{\C^+}_2(G^{\C}_1)_{s_1} & = & \ell(\ell-1)^2 +\sum\limits_{i=2}^{\ell}\ell \cdot (\ell-(i-1))^2 + \sum\limits_{j=1}^{\ell-2}\sum\limits_{i=j+1}^{\ell-1}j\cdot i.\\
(iii)\ M^{\C^-}_3(G^{\C}_1)_{s_1} & = & \ell-1 +\sum\limits_{i=2}^{\ell} (i-1)\cdot (\ell-(i-1)) + \sum\limits_{j=2}^{\ell-2}\sum\limits_{i=j+1}^{\ell-1}i\cdot (\ell+1-2i);\\
M^{\C^+}_3(G^{\C}_1)_{s_1} & = & (\ell -1)^2 +\sum\limits_{i=2}^{\ell}(\ell-(i-1))^2 + \sum\limits_{j=2}^{\ell-2}\sum\limits_{i=j+1}^{\ell-1}i\cdot (\ell+1-2i).
\end{eqnarray*}

and for the colour cluster $\C$ the graph $G^{\C}_2$ obtained from the Type-II construction has

\begin{eqnarray*}
(i)\ M^{\C^-}_1(G^{\C}_2)_{s_1} & = & \sum\limits_{i=1}^{\ell}(\ell -(i-1))\cdot i^2;\\
M^{\C^+}_1(G^{\C}_2)_{s_1} & = & \sum\limits_{i=1}^{\ell}(\ell -(i-1)^3.\\
(ii)\ M^{\C^-}_2(G^{\C}_2)_{s_1} & = & 2\cdot (\ell-1) + \sum\limits_{i=1}^{\ell-1}i(i+1)\cdot (\ell-i)+ \sum\limits_{j=1}^{\ell-2}\sum\limits_{i=j+2}^{\ell}j\cdot i;\\
M^{\C^+}_2(G^{\C}_2)_{s_1} & = & \ell(\ell-1)\cdot (\ell-1) + \sum\limits_{i=1}^{\ell-1}2i(i+1)(i-1) + \sum\limits_{j=1}^{\ell-2}\sum\limits_{i=j+2}^{\ell}j\cdot i.\\
(iii) M^{\C^-}_3(G^{\C}_2)_{s_1} & = & \sum\limits_{i=1}^{\ell}(\ell-(i-1)) -\ell + \sum\limits_{j=1}^{\ell-1}\sum\limits_{i=1}^{j}i - (\ell-2);\\
M^{\C^+}_3(G^{\C}_2)_{s_1} & = & (\ell-1)^2 +\sum\limits_{i=2}^{\ell}(\ell- (i-1))^2 + \sum\limits_{j=1}^{\ell-2}\sum\limits_{i=i}^{j}i.
\end{eqnarray*}
\end{thm}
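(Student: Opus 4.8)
The plan is to obtain every one of the twelve displayed identities by direct substitution of $\theta(c_i)=a_i=\ell-(i-1)$ into the formulas already established for a general colour cluster $\C$, namely the two corollaries of Sections~3.3 and~3.4 that give $M^{\C^-}_k(G^{\C}_1)$, $M^{\C^+}_k(G^{\C}_1)$ and $M^{\C^-}_k(G^{\C}_2)$, $M^{\C^+}_k(G^{\C}_2)$ for $k=1,2,3$. Those corollaries are stated for an arbitrary non-increasing weight sequence, and $s_1$ is non-increasing, so they apply verbatim: nothing new about the graph structure, about which clique is adjoined, or about the extremal colourings has to be re-proved here. The proof is therefore purely a matter of recording the substitutions, which is why no further simplification of the resulting sums is attempted in the statement.

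Concretely, for the first chromatic Zagreb index I would write $M^{\C^-}_1(G^{\C}_1)=\sum_{i=1}^{\ell}\theta(c_i)\,i^2=\sum_{i=1}^{\ell}(\ell-(i-1))\,i^2$ and $M^{\C^+}_1(G^{\C}_1)=\sum_{i=1}^{\ell}\theta(c_i)(\ell-(i-1))^2=\sum_{i=1}^{\ell}(\ell-(i-1))^3$; the identical two lines serve for $G^{\C}_2$ because $M^{\C^\pm}_1$ depends only on the multiset of colour subscripts (Lemma~\ref{Lem-3.1}, together with the theorem asserting $M^{\C^\pm}_1(G^{\C}_1)=M^{\C^\pm}_1(G^{\C}_2)$). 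For the second and third indices I would carry the substitution through each corollary term by term, leaving the clique double-sums $\sum_{j}\sum_{i}(\,\cdot\,)$ untouched — they are independent of the weights — and replacing only the terms that are linear in $\theta(c_i)$, e.g. $2(\ell-1)+\sum_{i=2}^{\ell}\theta(c_i)\,i+\sum_{j=2}^{\ell-1}\sum_{i=j+1}^{\ell}j\,i$ becomes $2(\ell-1)+\sum_{i=2}^{\ell}i(\ell-(i-1))+\sum_{j=2}^{\ell-1}\sum_{i=j+1}^{\ell}j\,i$, and similarly for $G^{\C}_2$.

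The one place that needs genuine care is the behaviour of the extremal colour maps under the substitution. In the general corollaries the maximum values $M^{\C^+}_2$ and $M^{\C^+}_3$ are expressed through relabelling maps such as $c_i\mapsto c_{\ell-(i-1)}$ (and the interleaving map $c_1\mapsto c_\ell,\ c_2\mapsto c_1,\ c_3\mapsto c_{\ell-1},\dots$ in the $G^{\C}_2$ case), so an occurrence of $\theta(c_i)$ there already encodes a \emph{relabelled} weight; when $\theta(c_j)=\ell-(j-1)$ one has $\theta(c_{\ell-(i-1)})=i$, and it is this post-map value, not $\ell-(i-1)$, that must be inserted. Tracking which occurrences of $\theta$ are pre-map and which are post-map is the main bookkeeping hazard, and it is precisely where an off-by-one or a mismatched summation bound would most easily slip in; the safest remedy is to evaluate each of the four maximum formulas for a small case ($\ell=3$ and $\ell=4$) against the relabelled graph directly before committing to the closed form. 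Aside from that, everything reduces to routine algebra, and the identities follow.
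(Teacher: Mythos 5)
Your proposal is correct and follows essentially the same route as the paper: the authors likewise prove each identity by invoking the definition of the relevant chromatic Zagreb index, the Type-I or Type-II construction (i.e.\ the general formulas already derived for $G^{\C}_1$ and $G^{\C}_2$), the substitution $\theta(c_i)=\ell-(i-1)$, and the appropriate extremal mappings $c_i\mapsto c_{\ell-(i-1)}$ or $c_\ell\mapsto c_1,\ c_1\mapsto c_2,\ldots$ for the maxima. Your additional remark about tracking pre-map versus post-map occurrences of $\theta$ is exactly the bookkeeping the paper's terse proof leaves implicit, so nothing further is needed.
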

\begin{proof}
For Type-I graphical embodiment, we have

\vspace{0.2cm}

\ni Part (i)(a): Follows from definition of the first chromatic Zagreb index, the Type-I graphical embodiment and the fact that $\theta(c_i) = \ell-(i-1)$. 

\vspace{0.2cm}

\ni Part (i)(b):  Follows from definition of the first chromatic Zagreb index, the Type-I graphical embodiment and the fact that $\theta(c_i) = \ell-(i-1)$ and the mapping $c_i\mapsto c_{\ell-(i-1)}$.

\vspace{0.2cm}

\ni Part (ii)(a):  Follows from definition of the second chromatic Zagreb index, the Type-I graphical embodiment and the fact that $\theta(c_i) = \ell-(i-1)$.

\vspace{0.2cm}

\ni Part (ii)(b):  Follows from definition of the second chromatic Zagreb index, the Type-I graphical embodiment and the fact that $\theta(c_i) = \ell-(i-1)$ and the mapping $c_i\mapsto c_{\ell-(i-1)}$.

\vspace{0.2cm}

\ni Part (iii)(a):  Follows from definition of the third chromatic Zagreb index, the Type-I graphical embodiment and the fact that $\theta(c_i) = \ell-(i-1)$.

\vspace{0.2cm}

\ni Part (iii)(b):  Follows from definition of the third chromatic Zagreb index, the Type-I graphical embodiment and the fact that $\theta(c_i) = \ell-(i-1)$ and the mapping $c_\ell\mapsto c_1, c_1\mapsto c_2,c_2\mapsto c_3 \ldots c_{\ell-2}\mapsto c_{\ell-1}$.

\vspace{0.25cm}

\ni and for Type-II graphical embodiment, we have

\vspace{0.2cm}

\ni Part (i)(a): Follows from definition of the first chromatic Zagreb index, the Type-II graphical embodiment and the fact that $\theta(c_i) = \ell-(i-1)$.

\vspace{0.2cm}

\ni Part (i)(b):  Follows from definition of the first chromatic Zagreb index, the Type-II graphical embodiment and the fact that $\theta(c_i) = \ell-(i-1)$ and the mapping $c_i\mapsto c_{\ell-(i-1)}$.

\vspace{0.2cm}

\ni Part (ii)(a):  Follows from definition of the second chromatic Zagreb index, the Type-II graphical embodiment and the fact that $\theta(c_i) = \ell-(i-1)$.

\vspace{0.2cm}

\ni Part (ii)(b):  Follows from definition of the second chromatic Zagreb index, the Type-II graphical embodiment and the fact that $\theta(c_i) = \ell-(i-1)$ and the mapping $c_i\mapsto c_{\ell-(i-1)}$. Also note that $\sum\limits_{j=1}^{\ell-2}\sum\limits_{i=j+2}^{\ell}j\cdot i = (\sum\limits_{i=1}^{\ell-1}(i(\ell-i) - \sum\limits_{i=1}^{\ell-2}i)$.

\vspace{0.2cm}

\ni Part (iii)(a):  Follows from definition of the third chromatic Zagreb index, the Type- II graphical embodiment and the fact that $\theta(c_i) = \ell-(i-1)$.

\vspace{0.2cm}

\ni Part (iii)(b):  Follows from definition of the third chromatic Zagreb index, the Type-II graphical embodiment and the fact that $\theta(c_i) = \ell-(i-1)$ and the mapping $c_\ell\mapsto c_1, c_1\mapsto c_2,c_2\mapsto c_3 \ldots c_{\ell-2}\mapsto c_{\ell-1}$.
\end{proof}

\ni 
\begin{rem}{\rm 
The summation identities $\sum\limits_{i=1}^{\ell}i = \frac{n(n+1)}{2}$, $\sum\limits_{i=1}^{\ell}i^2 =\frac{n(n+1)(2n+1)}{6}$, $\sum\limits_{i=1}^{\ell}i^3 =(\frac{n(n+1)}{2})^2$, $\sum\limits_{i=1}^{\ell}i^4 =\frac{6\ell^5 +15\ell^4+10\ell^3-\ell}{30}$ find partial closure of the results above. For example, $M^{\C^-}_1(G^{\C}_1)_{s_1} = \sum\limits_{i=1}^{\ell}(\ell -(i-1))\cdot i^2 = \frac{\ell^4 +4\ell^3 + 5\ell^2 +2\ell}{12}$. The partial closure of the other results can also be established in a straight forward manner.
}\end{rem}

\subsection{Fibonacci numbers}

The second sequence we consider is the \textit{mirror image} of the Fibonacci sequence. Let $f_0=0$, $f_1=1$ and $f_n = f_{n-1} + f_{n-2}$, where $n \geq 2$. Let $s_2 = \{a_n\}$, $a_i=f_{\ell -(i-1)}$, $1\leq i \leq \ell$.

\begin{thm}
Let $\C = (\mathcal{C}_i)$, $1\leq i \leq \ell$, and $\theta(c_i) = a_i$, $a_i \in s_2$. Then, for the colour cluster $\C$ the graph $G^{\C}_1$ obtained from the Type-I construction has

\begin{eqnarray*}
(i)\ M^{\C^-}_1(G^{\C}_1)_{s_2} & = & \sum\limits_{i=1}^{\ell}f_{\ell -(i-1)}\cdot i^2;\\
M^{\C^+}_1(G^{\C}_1)_{s_2} & = & \sum\limits_{i=1}^{\ell}f_{\ell -(i-1)}\cdot(\ell -(i-1))^2.\\
(ii)\ M^{\C^-}_2(G^{\C}_1)_{s_2} & = & 2(f_\ell-1) + \sum\limits_{i=2}^{\ell}i\cdot f_{(\ell-(i-1))}+\sum\limits_{j=2}^{\ell-1}\sum\limits_{i=j+1}^{\ell}j\cdot i;\\
M^{\C^+}_2(G^{\C}_1)_{s_2} & = & \ell\cdot (f_\ell-1)^2 +\sum\limits_{i=2}^{\ell}f_{\ell-(i-1)}\cdot \ell (\ell-(i-1)) + \sum\limits_{j=1}^{\ell-2}\sum\limits_{i=j+1}^{\ell-1}j\cdot i.\\
(iii)\ M^{\C^-}_3(G^{\C}_1)_{s_2} & = & \ell-1 +\sum\limits_{i=2}^{\ell}f_{\ell-(i-1)}\cdot (i-1) + \sum\limits_{j=2}^{\ell-2}\sum\limits_{i=j+1}^{\ell-1}i\cdot (\ell+1-2i);\\
M^{\C^+}_3(G^{\C}_1)_{s_2} & = & (\ell -1)^2 +\sum\limits_{i=2}^{\ell}f_{\ell-(i-1)}\cdot (\ell-(i-1)) + \sum\limits_{j=2}^{\ell-2}\sum\limits_{i=j+1}^{\ell-1}i\cdot (\ell+1-2i).
\end{eqnarray*}

and for the colour cluster $\C$ the graph $G^{\C}_2$ obtained from the Type-II construction has
\begin{eqnarray*}
(i)\ M^{\C^-}_1(G^{\C}_2)_{s_2} & = & \sum\limits_{i=1}^{\ell}f_{\ell -(i-1)}\cdot i^2;\\
M^{\C^+}_1(G^{\C}_2)_{s_2} & = & \sum\limits_{i=1}^{\ell}f_{\ell -(i-1)}\cdot (\ell -(i-1))^2.\\
(ii)\ M^{\C^-}_2(G^{\C}_2)_{s_2} & = & 2\cdot (f_\ell-1) + \sum\limits_{i=1}^{\ell-1}f_{i-1}\cdot i(i+1) + \sum\limits_{j=1}^{\ell-2}\sum\limits_{i=j+2}^{\ell}j\cdot i;\\
M^{\C^+}_2(G^{\C}_2)_{s_2} & = & \ell(\ell-1)\cdot (f_\ell-1) + \sum\limits_{i=1}^{\ell-1}f_i\cdot i(i+1) + \sum\limits_{j=1}^{\ell-2}\sum\limits_{i=j+2}^{\ell}j\cdot i.\\
(iii)\ M^{\C^-}_3(G^{\C}_2)_{s_2} & = & \sum\limits_{i=1}^{\ell}f_{\ell-(i-1)} -\ell + \sum\limits_{j=1}^{\ell-1}\sum\limits_{i=1}^{j}i - (\ell-2);\\
M^{\C^+}_3(G^{\C}_2)_{s_2} & = & (f_\ell-1)^2 +\sum\limits_{i=2}^{\ell}f_{\ell-(i-1)}\cdot (\ell- (i-1)) + \sum\limits_{j=1}^{\ell-2}\sum\limits_{i=i}^{j}i.
\end{eqnarray*}
\end{thm}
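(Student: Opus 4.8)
The proof is a mechanical substitution into the two corollaries already proved for $G^{\C}_1$ and $G^{\C}_2$ (the ones expressing $M^{\C^{-}}_j$ and $M^{\C^{+}}_j$, $j=1,2,3$, of these embodiments in terms of the class weights $\theta(c_i)$), so the plan is to set up that substitution cleanly and then check that nothing extra is needed. First I would verify the single hypothesis those corollaries rely on, namely that the colour classes have been listed in non-increasing order of size. Since $f_\ell\geq f_{\ell-1}\geq\cdots\geq f_2\geq f_1$ for every $\ell\geq 2$, the cluster $\C$ with $\theta(c_i)=a_i=f_{\ell-(i-1)}$ satisfies $|\mathcal{C}_i|\geq|\mathcal{C}_{i+1}|$, so it is a legitimate ``initiation'' cluster and the earlier corollaries apply to it verbatim; consequently none of the underlying edge-counting or colour-subscript bookkeeping has to be redone.

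With that in hand I would treat the twelve formulas in four blocks. For $M^{\C^{-}}_1$ and $M^{\C^{+}}_1$ of both $G^{\C}_1$ and $G^{\C}_2$: these follow from the definition of the first chromatic Zagreb index, using $\theta(c_i)=f_{\ell-(i-1)}$ directly for the minimum and composing with the optimal relabelling $c_i\mapsto c_{\ell-(i-1)}$ for the maximum, which turns $\sum_i\theta(c_i)\,i^2$ into $\sum_i f_{\ell-(i-1)}\,i^2$ and $\sum_i\theta(c_i)(\ell-(i-1))^2$ into $\sum_i f_{\ell-(i-1)}(\ell-(i-1))^2$. For $M^{\C^{\pm}}_2$ and $M^{\C^{\pm}}_3$ of $G^{\C}_1$ I would copy the general expression from the $G^{\C}_1$-corollary and replace every $\theta(c_1)$ by $f_\ell$ (yielding the anchor terms $2(f_\ell-1)$, $\ell(f_\ell-1)^2$, $(\ell-1)^2$, and the plain $(\ell-1)$, as the case requires) and every $\theta(c_i)$ inside a summand by $f_{\ell-(i-1)}$; the nested sums $\sum_j\sum_i j\,i$ and $\sum_j\sum_i i(\ell+1-2i)$ coming from the clique completed on the vertices $v_{j,1}$ carry over unchanged, because they depend on colour subscripts only, not on weights. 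The same routine applied to the $G^{\C}_2$-corollary produces the remaining six formulas; here the only subtlety is the shifted colour index, namely that in the minimum-$M_2$ term the factor $\theta(c_{i+1})$ equals $a_{i+1}=f_{\ell-i}$, while in the maximum-$M_2$ term $\theta(c_{\ell-(i-1)})$ equals $a_{\ell-(i-1)}=f_i$, and the maximum-$M_3$ case additionally uses the already-justified zig-zag relabelling $c_1\mapsto c_\ell,\ c_2\mapsto c_1,\ c_3\mapsto c_{\ell-1},\ c_4\mapsto c_2,\dots$ before the substitution is made.

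The only genuine hazard is therefore clerical rather than mathematical: keeping three index conventions apart --- the natural order for the minimum indices, the reversal $c_i\mapsto c_{\ell-(i-1)}$ for the maximum of $M_1$ and $M_2$, and the zig-zag map for the maximum of $M_3$ --- and composing each correctly with the defining rule $\theta(c_j)=f_{\ell-(j-1)}$ so that each summand ends up carrying the right Fibonacci subscript. As a guard against slips I would specialise every formula to $\ell=2,3,4$ and match it against the corresponding $G^{\C}_1$- and $G^{\C}_2$-corollaries, and, paralleling the Remark for $s_1$, I would note that the standard identities $\sum_{k=1}^{n}f_k=f_{n+2}-1$ and $\sum_{k=1}^{n}f_k^{\,2}=f_nf_{n+1}$ give partial closed forms here as well; for instance $M^{\C^{-}}_3(G^{\C}_2)_{s_2}=f_{\ell+2}-1-2(\ell-1)+\sum_{j=1}^{\ell-1}\frac{j(j+1)}{2}$.
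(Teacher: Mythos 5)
Your proposal is correct and is essentially the paper's own proof: the paper disposes of all twelve formulas, part by part, by asserting that each ``follows from the definition'' of the relevant chromatic Zagreb index, the Type-I or Type-II construction, the substitution $\theta(c_i)=f_{\ell-(i-1)}$, and (for the maxima) the reversal map $c_i\mapsto c_{\ell-(i-1)}$ or the zig-zag/cyclic map -- which is exactly your plan of substituting into the earlier $G^{\C}_1$- and $G^{\C}_2$-corollaries after checking the ordering hypothesis $f_\ell\geq f_{\ell-1}\geq\cdots\geq f_1$. The only caveat is that your more careful bookkeeping (e.g.\ $\theta(c_{i+1})=f_{\ell-i}$ in the minimum second index for $G^{\C}_2$, and the anchor $\ell(\ell-1)\,(\theta(c_1)-1)$ coming from the $r_1-1$ pendant-type edges) produces expressions that differ from a few printed terms such as $f_{i-1}$ and $\ell\,(f_\ell-1)^2$; this reflects apparent slips in the stated theorem (and in the general proposition it specialises) rather than a gap in your argument.
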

\begin{proof}
For Type-I graphical embodiment, we have

\vspace{0.2cm}

\ni Part (i)(a): Follows from definition of the first chromatic Zagreb index, the Type-I graphical embodiment and the fact that $\theta(c_i) = f_{\ell-(i-1)}$.

\vspace{0.2cm}

\ni Part (i)(b):  Follows from definition of the first chromatic Zagreb index, the Type-I graphical embodiment and the fact that $\theta(c_i) = f_{\ell-(i-1)}$ and the mapping $c_i\mapsto c_{\ell-(i-1)}$.

\vspace{0.2cm}

\ni Part (ii)(a):  Follows from definition of the second chromatic Zagreb index, the Type-I graphical embodiment and the fact that $\theta(c_i) = f_{\ell-(i-1)}$.

\vspace{0.2cm}

\ni Part (ii)(b):  Follows from definition of the second chromatic Zagreb index, the Type-I graphical embodiment and the fact that $\theta(c_i) = f_{\ell-(i-1)}$ and the mapping $c_i\mapsto c_{\ell-(i-1)}$.

\vspace{0.2cm}

\ni Part (iii)(a):  Follows from definition of the third chromatic Zagreb index, the Type-I graphical embodiment and the fact that $\theta(c_i) = f_{\ell-(i-1)}$.

\vspace{0.2cm}

\ni Part (iii)(b):  Follows from definition of the third chromatic Zagreb index, the Type-I graphical embodiment and the fact that $\theta(c_i) = f_{\ell-(i-1)}$ and the mapping $c_\ell\mapsto c_1, c_1\mapsto c_2,c_2\mapsto c_3 \ldots c_{\ell-2}\mapsto c_{\ell-1}$.

\vspace{0.25cm}

\ni For Type-II graphical embodiment, we have

\vspace{0.2cm}

\ni Part (i)(a): Follows from definition of the first chromatic Zagreb index, the Type-II graphical embodiment and the fact that $\theta(c_i) = f_{\ell-(i-1)}$.

\vspace{0.2cm}

\ni Part (i)(b):  Follows from definition of the first chromatic Zagreb index, the Type-II graphical embodiment and the fact that $\theta(c_i) = f_{\ell-(i-1)}$ and the mapping $c_i\mapsto c_{\ell-(i-1)}$.

\vspace{0.2cm}

\ni Part (ii)(a):  Follows from definition of the second chromatic Zagreb index, the Type-II graphical embodiment and the fact that $\theta(c_i) = f_{\ell-(i-1)}$.

\vspace{0.2cm}

\ni Part (ii)(b):  Follows from definition of the second chromatic Zagreb index, the Type-II graphical embodiment and the fact that $\theta(c_i) = f_{\ell-(i-1)}$ and the mapping $c_i\mapsto c_{\ell-(i-1)}$. Also note that $\sum\limits_{j=1}^{\ell-2}\sum\limits_{i=j+2}^{\ell}j\cdot i = (\sum\limits_{i=1}^{\ell-1}(i(\ell-i) - \sum\limits_{i=1}^{\ell-2}i)$.

\vspace{0.2cm}

\ni Part (iii)(a):  Follows from definition of the third chromatic Zagreb index, the Type-I graphical embodiment and the fact that $\theta(c_i) = f_{\ell-(i-1)}$.

\vspace{0.2cm}

\ni Part (iii)(b):  Follows from definition of the third chromatic Zagreb index, the Type-II graphical embodiment and the fact that $\theta(c_i) = f_{\ell-(i-1)}$ and the mapping $c_\ell\mapsto c_1, c_1\mapsto c_2,c_2\mapsto c_3 \ldots c_{\ell-2}\mapsto c_{\ell-1}$.
\end{proof}

\begin{rem}{\rm 
The Fibonacci summation identities $\sum\limits_{i=a}^{\ell}f_i = f_{\ell+2} - f_{a+1}$,  $\sum\limits_{i=1}^{n}f^2_i =f_{n+1}\cdot f_{n+2}$, $\sum\limits_{i=1}^{\ell}f_i^3 = f_{3\ell+2} + 6(-1)^{\ell+1}\cdot f_{\ell-1} + 5$ and $\sum\limits_{i=1}^{\ell}f_i^4 =f_{4\ell+2} + 4(-1)^{\ell+1}\cdot f_{2\ell+1} + 6\ell + 3$ find partial closure of the results above. These exercises are left for the reader. For useful references see \cite{VCH,KSR, DZ1}.
}\end{rem}

\section{Conclusion}

Note that any tree $T$ that allows $\C$ as a proper colouring is a construction type, say Type-$i$. It then follows that any connected subtree $T' \subseteq T$ of order $\ell$ that has exactly one coloured vertex of each distinct colour in $\C$, can be completed by adding exactly $\frac{1}{2}(\ell-1)(\ell-2)$ edges to obtain $G^{\C}$. It is important to note that not all constructions types have this property and therefore, cannot yield the graphical embodiment $G^{\C}$.

\begin{ill}{\rm 
Consider the colour cluster $\C = ((c_1,c_1,c_1,c_1,c_1,c_1),(c_2,c_2,c_2,c_2),\\ (c_3,c_3,c_3),(c_4,c_4))$. The path $v_{1,1}v_{2,1}v_{1,2}v_{2,2}v_{1,3}v_{2,3}v_{1,4}v_{2,4}v_{1,5}v_{3,1}v_{1,6}v_{3,2}v_{4,1}v_{3,3}v_{4,2}$ represents a path-type construction to obtain $G_p$ with minimum edges allowing $\C$ as a proper colouring. However, a path subgraph that contains colours $c_1,c_2,c_3$ cannot also contain colour $c_4$. Similarly, a path subgraph that contains colours $c_1,c_3,c_4$ cannot also contain colour $c_2$. Finally, path subgraphs that contain either $c_2,c_3,c_4$ or $c_1,c_2,c_4$ do not exist. Therefore,  we need more than $\frac{1}{2}(\ell-1)(\ell -2)$ additional edges in order to construct $G^{\C}_p$.
}\end{ill}

The definitions of minimum and maximum chromatic Zagreb indices are reliant on the premises of a given (fixed) graph construction. This study generalised to two primary variants i.e. graph construction and colour cluster for a given number of distinct colours. Note that the respective $|\mathcal{C}_i|= r_i=\theta(c_i) \geq 1$, $\forall$ $i\in \N$.

Hence, although this study considered the Type-I and II construction followed by the analysis of chromatic Zagreb indices of both the positive integers and the Fibonacci sequence, the construction type which results in the $min\{min\}$, $max\{min\}$ and the $min\{max\}$, $max\{max\}$ values for the chromatic Zagreb indices $M^{\C^-}_i$, $M^{\C^+}_i,\ i = 1,2,3$ respectively, remains elusive at this stage.
 
Generally, the results are computationally complex and it is indicative that deeper results will most likely require advanced computer analysis. The application to colour clusters for which the colour weights represent other defined non-negative integer sequences or specialised number sequences opens a wide scope for further research.

Finally it is suggested that the Type-II graphical embodiment can be described recursively as the colour cluster extends from $\C = (\mathcal{C}_i)$, $1\leq i \leq \ell$ to $\C' = (\mathcal{C}_i)$, $1\leq i \leq \ell+1$.

\end{document}